\newtheorem{theorem}{Theorem}[section]
\newtheorem{lemma}{Lemma}[section]
\newtheorem*{lemmaA}{Lemma}
\newtheorem{remark}{Remark}[section]
\numberwithin{equation}{section}
\newenvironment{proof}[1][Proof]{\noindent\textbf{#1.} }{\hfill $\Box$}
\begin{document}
\date{}
\title{\bf{Optimal control of an age-structured problem modelling mosquito plasticity}}
\author{Lin Lin Li \textsuperscript{a}, Cl\'audia Pio Ferreira \textsuperscript{b}, Bedreddine Ainseba \textsuperscript{a}\\
\\
\footnotesize{\textsuperscript{a} Institut de Math\'ematiques de Bordeaux, Universite de Bordeaux, Bordeaux, France}\\
\footnotesize{\textsuperscript{b} Departamento de Bioestat\'{\i}stica, Intituto de Bioci\^encias de Botucatu/Unesp}}
\maketitle

\begin{abstract}
\noindent{In this paper, we study an age-structured model which has strong biological background about mosquito plasticity. Firstly, we prove the existence of solutions and the comparison principle for a generalized system. Then, we prove the existence of the optimal control for the best harvesting. Finally, we establish necessary optimality conditions.}
\end{abstract}

\section{Introduction}
\noindent
Throughout the human history, people have always been combating against many infectious diseases, such as malaria, dengue,
yellow and Chikungunya fever, encephalitis and the diseases have caused uncounted mortality of mankind. During the past decades, many researchers studied the pathology of these infectious diseases and tried to control the transmission of them.
One of the most studied diseases is malaria, which is mainly transmitted by Anopheles gambiae and Anopheles funestus, the main vectors \cite{Bn}. As the statistical data show, malaria affects more than 100 tropical countries, placing 3.3
billion people at risk \cite{WHO1} and the life of one African child's life is taken by malaria every minute \cite{WHO2}. To reduce human's suffering from malaria, people have been seeking efficient ways to control the malaria transmission for many years. In the past decades, the control of malaria has made slow but steady progress and the overall mortality rate has dropped by more than $25\%$ since 2000 \cite{Ma}. The main strategies of controlling malaria are insecticide treated nets (ITNs) and indoor residual spraying (IRS) \cite{Bn,Hb,Ma,WHO1}. However, the effectiveness of these strategies depends on the susceptibility of the vector species to insecticides and their behaviours, ecology and population genetics.

ITNs and IRS are efficient ways against the main vectors of malaria in Africa. However, the resistance of mosquitoes to insecticides forces them to adapt their behaviours to ensure their survival and reproduction. Especially, they can adapt their bitting behaviour from night to daylight \cite{Ss}. This new behavioural patterns lead to a resurgence of malaria morbidity in several parts of Africa \cite{Tj}. Thus, new methods are desired to replace the traditional strategies.

In this work, we are going to model mosquito population adaption and study the optimal control problem. We consider a linear model describing the dynamics of a single species population with age dependence and spatial structure as follows
\begin{equation}\label{eq:01}
\left\{ \begin{array}{lll}
Dp- \delta \Delta p+\mu (a)p = u(a,t,x)p,&(a,t,x)\in Q_{a_\dagger}, \\
p(a,t,0) =p(a,t,24),&(a,t)\in (0,a_\dagger)\times (0,T), \\
\partial_x p(a,t,0) =\partial_x  p(a,t,24),&(a,t)\in (0,a_\dagger)\times (0,T), \\
\displaystyle p(0,t,x)=\displaystyle \int_0^{a_\dagger} \beta(a)\displaystyle \int_{x-\eta}^{x+\eta}K(x,s)p(a,t,s)ds da,&(t,x)\in (0,T)\times (0,24), \\
\displaystyle p(a,0,x)=p_0(a,x) , &(a,x)\in (0,{a_\dagger} ) \times (0,24),
\end{array}
\right.
\end{equation}
where $Q_{{a_\dagger}}=(0,{a_\dagger} ) \times (0,T)\times (0,24)$ and
\[
Dp(a,t,x)={\lim_{\varepsilon \rightarrow 0} }\frac{p\left(
a+\varepsilon ,t+\varepsilon, x \right) -p\left( a,t,x\right) }{\varepsilon }
\]
is the directional derivative of $p$ with respect to  direction $\left(
1,1,0\right) .$ For $p$ smooth enough, it is easy to know that
\[
Dp=\frac {\partial p}{\partial t}+\frac {\partial p}{\partial a}.
\]
Here, $p(a,t,x)$ is the distribution of individuals of age $a\ge 0$ at time $t\ge 0$ and bitting at time $x \in [0,24] $,
$a_{\dagger }$ means the life expectancy of an individual and $T$ is a positive constant. As we announced, the mosquitoes can adapt their bitting time. Thus, we set their adapting model to be a $\Delta$ diffusion with a diffusive coefficient $\delta$. Moreover, $\beta(a)$ and $\mu(a)$ denote the natural fertility-rate and the natural death-rate of individual of age $a$, respectively. In fact, the new generation is also able to adapt the bitting time in order to maximize its fitness. Let $\eta$ be the maximum bitting time difference which the new generation can reach and we model the adaption of the new generation by a kernel $K$ as defined as below
\begin{equation*}\label{eq:02}
K(x,s)= \begin{cases}
   {(x-s)^{2}}{e^{-(x-s)^{2}}}, &s \in (0,24), \\
   0,  &\text{else}.
   \end{cases}
\end{equation*}
The control function  $u(a,t,x)$  represents the insecticidal effort, such as the use of ITNs and RIs.

In our paper, the main goal is to prove that there exists an optimal control $u$ in limited conditions, that is, $u$ is bounded by two functions $\varsigma_1$ and $\varsigma_2$ such that the insecticidal efficiency reaches the best. Since the control function $u$ is negative, it means that
we can deal with the following optimal problem
\[
(OH)\ \ \ \ \ \ \ \ \    Maximize \left\{-\displaystyle \int_{Q_{{a_\dagger}}}u(a,t,x)p^{u}(a,t,x)dtdxda\right\},
\]
subject to $u\in U$,
\[
U=\{u(a,t,x)\in L^{2}(Q_{{a_\dagger}})| \ \varsigma_{1}(a,t,x)\leq u(a,t,x)\leq \varsigma_{2}(a,t,x) \ a.e.\  in \ Q_{{a_\dagger}}\},
\]
where $\varsigma_{1},\varsigma_{2}\in L^{\infty}(Q_{{a_\dagger}}),\ \varsigma_{1}(a,t,x)\leq \varsigma_{2}(a,t,x) \leq 0 \ a.e.\  in \ Q_{{a_\dagger}}$ and
$p^{u}$ is the solution of system \eqref{eq:01}. Here, we say that the control $u^{*}\in U $ is optimal if
\[
\displaystyle \int_{Q_{{a_\dagger}}}u^{*}(a,t,x)p^{u^{*}}(a,t,x)dtdxda\leq \displaystyle \int_{Q_{{a_\dagger}}}u(a,t,x)p^{u}(a,t,x)dtdxda,
\]
for any $u\in U$. The pair $(u^{*},p^{u^{*}})$ is an optimal pair and $\displaystyle \int_{Q_{{a_\dagger}}}u^{*}p^{u^{*}}dtdxda$ is the optimal value of the cost functional.

Let us recall some history about the optimal control researches. Since 1985 when Brokate \cite{Bm} first proposed the optimal control of the population dynamical system with an age structure, it has been widely concerned and extensively studied by more and more researchers in the past few years. It is worth mentioning that the researches of Gurtin and Murphy \cite{Gm1,Gm2} about the optimal harvesting of age-structured populations provide an important basis for subsequent researches of the optimal control problem. As is well known, the optimal harvesting problem governed by nonlinear age dependent population dynamics with diffusion was considered by Ani\c{t}a \cite{As}, where he mainly discussed the impact of the control in homogeneous Neuman boundary conditions. For more rich results about the optimal control of an age structure with non-periodic boundary conditions, one can refer to \cite{As2,As3,Fk,Zc} and references cited therein. Note that the above results are about nonperiodic boundary conditions.

However, we have seen from the practical significance of biology that it is advantageous to consider age-structured models with periodic boundary conditions and nonlocal birth processes. We would like to refer to \cite{Al,Pe} for some studies about the optimal control problem with periodic boundary conditions. We also refer to \cite{Am,Kd,Lf,Lg} as reviewing references of the optimal control problem. Let us now mention some of our work about other aspects of system \eqref{eq:01} with periodic boundary conditions and nonlocal birth processes. In \cite{Ll1}, large time behaviour of the solution for such age-structured population model was considered. Moreover, we considered the local exact controllability of such age-structured problem in \cite{Ll2}. In this work, we study the optimal control of system \eqref{eq:01}.

From the biological point of view (one can refer to \cite{Ga,Gu,We}), we make the following hypotheses throughout this paper:
\begin{description}
\item[(J1)]$\mu(a)\in L^{\infty}_{loc}((0,a_{\dagger}))$, $\displaystyle \int_0^{a_{\dagger}} {\mu}(a)d a=+\infty $ and $\mu(a)\geq 0 \ a.e. \ in \ (0,a_{\dagger})$;
\item[(J2)]$\beta(a) \in L^{\infty}((0,a_{\dagger}))$, $\beta(a)\geq0 \ a.e. \ in \ (0,a_{\dagger})$;
\item[(J3)]$p_{0}(a,x)\in L^{2}((0,{a_\dagger} ) \times (0,24))$, $p_{0}(a,x)\geq 0  \ a.e. \ in \ (0,{a_\dagger} ) \times (0,24)$.
\end{description}

Now we state our main results.
\begin{theorem}\label{theorem1}
For any  $u\in U$, there exists a unique solution $p^{u}(a,t,x)\in L^2(Q_{a_\dagger})$ of the system \eqref{eq:01}.
\end{theorem}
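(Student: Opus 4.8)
The plan is to solve the linear system \eqref{eq:01} for a fixed control $u\in U$ (note that then $u\in L^\infty(Q_{a_\dagger})$, since $\varsigma_1\le u\le\varsigma_2$ with $\varsigma_i\in L^\infty$) by the method of characteristics combined with a fixed-point argument on the birth term. The directional derivative $D=\partial_t+\partial_a$ is a derivative along the lines $(a,t,x)=(a_0+s,t_0+s,x)$, so along each such line the equation reduces to the parabolic problem $\frac{d}{ds}P=\delta\Delta_x P+\big(u-\mu\big)P$ in the single variable $x\in(0,24)$ with periodic boundary conditions, where $P(s,x)=p(a_0+s,t_0+s,x)$. First I would set $A=\delta\Delta$ on $L^2(0,24)$ equipped with periodic boundary conditions (domain the periodic $H^2$), which generates an analytic contraction semigroup $\{e^{sA}\}_{s\ge0}$; the bounded, nonpositive zeroth-order term $u-\mu$ (bounded on every characteristic segment that stays away from $a_\dagger$, using (J1)) is then an admissible multiplicative perturbation and yields an evolution family along characteristics via the variation-of-constants formula.

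Second, I would use this to express $p$ explicitly in terms of the unknown birth function $b(t,x):=p(0,t,x)$ and the initial datum $p_0$. Introducing the survival factor $\Pi(a)=\exp\!\big(-\int_0^a\mu(\sigma)\,d\sigma\big)$, the characteristics split the domain into the region $t<a$, where $p$ is determined by $p_0(a-t,\cdot)$, and the region $t>a$, where $p$ is determined by $b(t-a,\cdot)$; in each region $p$ is the corresponding datum propagated by $e^{sA}$ and damped by the appropriate ratio of survival factors together with the (nonpositive) action of $u$.

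Third, substituting this representation into the nonlocal renewal condition (the fourth line of \eqref{eq:01}) gives a closed linear integral equation $b=\mathcal F b+g$, where $g$ gathers the contribution of $p_0$ and $\mathcal F$ is the integral operator built from $\beta$, the spatial kernel $K$, the semigroup $e^{sA}$ and the survival factors. Because the birth rate at time $t$ depends only on births at earlier times $t-a<t$, the operator $\mathcal F$ is of Volterra type in $t$; I would estimate its iterates in $C([0,T];L^2(0,24))$ (or in an exponentially weighted Bielecki norm in $t$) and show that $\|\mathcal F^n\|\to0$ factorially, so that $I-\mathcal F$ is boundedly invertible. This produces a unique $b$, hence by back-substitution a unique $p$, and the Volterra/contraction structure delivers uniqueness directly.

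The main obstacle I anticipate is twofold and concentrated in the last two steps: controlling the singular mortality near the maximal age, and closing the estimate for $\mathcal F$ despite the doubly nonlocal birth term. The condition $\int_0^{a_\dagger}\mu=+\infty$ forces $\Pi(a)\to0$ as $a\to a_\dagger^-$, which is exactly what guarantees that the age integral $\int_0^{a_\dagger}\beta(a)\Pi(a)\cdots\,da$ defining $\mathcal F$ converges even though $\mu\notin L^\infty$; one must verify that the propagated quantities stay in $L^2$ uniformly. At the same time, the birth operator couples an integral in age (weighted by $\beta\in L^\infty$), an integral in the bitting variable (the bounded kernel $K$), and the heat flow $e^{sA}$, so the estimate must bound all three simultaneously in one norm. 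Here I would exploit that $\|e^{sA}\|_{\mathcal L(L^2)}\le1$, that $\beta$ and $K$ are bounded, and that $u\le0$, so that each factor is a contraction or a bounded operator and the time-convolution (Volterra) structure yields the decay of $\|\mathcal F^n\|$. Once existence and uniqueness of $b$ are settled, membership $p\in L^2(Q_{a_\dagger})$ follows from the representation formula and Fubini, completing the proof.
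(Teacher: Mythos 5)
Your outline is essentially sound and follows the same skeleton as the paper: reduce the equation along the characteristics $a-t=\mathrm{const}$ to a one-dimensional periodic heat problem in $x$, express the solution in terms of the unknown birth function $b(t,x)=p(0,t,x)$, and close the argument with a fixed point for the renewal operator $\mathcal F$ in an exponentially weighted (Bielecki) norm --- the paper's Lemma 2.4 does exactly this, with the weight $e^{-\lambda t}$ and $\lambda$ large. The genuine differences are in the technical execution. First, where you invoke the semigroup $e^{sA}$ and a variation-of-constants evolution family for the bounded perturbation $u-\mu$, the paper instead expands in the periodic eigenbasis of $-\Delta$ on $(0,24)$ and runs energy estimates plus Bellman's lemma, and it handles the zeroth-order term by freezing it (a map $q\mapsto p_{b,q}$) and contracting in $q$ for small $T$. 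Second, and more substantively, the treatments of the singular mortality diverge: the paper truncates, sets $\mu^N=\min\{\mu,N\}$, solves for each $N$ under the boundedness assumption (A), and passes to the limit using the comparison principle of Lemma 2.5; you instead absorb $\mu(a)$ into survival-factor ratios $\Pi(a)/\Pi(a-t)\le 1$, which works cleanly here precisely because in system \eqref{eq:01} the mortality depends on $a$ alone while only the bounded control $u$ is left in the generator. Your route avoids the truncation-and-monotone-limit machinery (and hence the comparison lemma) for Theorem \ref{theorem1} itself, at the price of relying on $\mu=\mu(a)$; the paper's route is heavier but yields the general statement for $\mu(a,t,x)$ in system \eqref{eq:2.1}, together with the comparison principle that is then reused in the proofs of Theorems \ref{theorem2} and \ref{theorem3}. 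One point you should make precise if you write this up: since $u$ depends on $x$ it does not commute with $\Delta$, so the ``damping by the action of $u$'' must really be an evolution family obtained by bounded perturbation, not a multiplicative factor; with that understood, your contractivity and Volterra estimates go through.
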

\begin{theorem}\label{theorem2}
Problem $(OH)$ admits at least one optimal pair $(u^*,p^*)$.
\end{theorem}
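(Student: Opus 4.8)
The plan is to use the direct method of the calculus of variations. Set
\[
d=\sup_{u\in U}\left\{-\int_{Q_{a_\dagger}}u\,p^{u}\,dt\,dx\,da\right\},
\]
and first check that $d$ is finite: since $\varsigma_{1}\le u\le\varsigma_{2}\le 0$ and, by Theorem~\ref{theorem1} together with the comparison principle, $0\le p^{u}\le \bar p$ for a bound $\bar p$ independent of $u$ (obtained by comparison with the solution corresponding to $u\equiv 0$), the integrand is controlled uniformly in $u$, so $d\in[0,+\infty)$. I would then fix a maximizing sequence $(u_{n})\subset U$ with $-\int_{Q_{a_\dagger}}u_{n}p^{u_{n}}\to d$.

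Next I would extract convergent subsequences. The set $U$ is convex, bounded and closed in $L^{2}(Q_{a_\dagger})$, hence weakly compact, so along a subsequence $u_{n}\rightharpoonup u^{*}$ weakly in $L^{2}$, and $u^{*}\in U$ because convex closed sets are weakly closed. Writing $p_{n}:=p^{u_{n}}$, the heart of the argument is to obtain \emph{a priori} estimates on $p_{n}$ that are uniform in $n$. Multiplying the state equation by $p_{n}$ and integrating, using $u_{n}\le 0$ to sign-control the reaction term $\int u_{n}p_{n}^{2}\le 0$ and the periodicity in $x$ to kill the boundary contributions from $-\delta\Delta p_{n}$, I expect a uniform bound for $p_{n}$ in the energy space (in particular for $\nabla_{x}p_{n}$ in $L^{2}$) together with a bound on $Dp_{n}$; the nonlocal birth term at $a=0$ is linear and estimated through $\beta\in L^{\infty}$ and the kernel $K$.

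With these estimates I would apply an Aubin--Lions / Fr\'echet--Kolmogorov type compactness argument, adapted to the transport operator $D=\partial_{t}+\partial_{a}$ coupled with the spatial diffusion, to extract a further subsequence with $p_{n}\to p^{*}$ strongly in $L^{2}(Q_{a_\dagger})$ and $p_{n}\rightharpoonup p^{*}$ weakly in the energy space. Passing to the limit in the (linear) weak formulation of \eqref{eq:01}, including the periodic and nonlocal boundary conditions, identifies $p^{*}=p^{u^{*}}$. The only term mixing the two sequences is $u_{n}p_{n}$, and combining $u_{n}\rightharpoonup u^{*}$ weakly with $p_{n}\to p^{*}$ strongly yields $\int_{Q_{a_\dagger}}u_{n}p_{n}\to\int_{Q_{a_\dagger}}u^{*}p^{*}$; hence $-\int_{Q_{a_\dagger}}u^{*}p^{u^{*}}=d$, so $(u^{*},p^{*})=(u^{*},p^{u^{*}})$ is an optimal pair.

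The main obstacle is precisely this passage to the limit in the bilinear cost term $\int_{Q_{a_\dagger}}u_{n}p_{n}$: the product of two merely weakly convergent sequences need not converge to the product of the limits, so everything rests on upgrading the convergence of the states from weak to strong. Securing the compactness needed for that upgrade is delicate because the first-order directional derivative $D$ provides no smoothing on its own, so the required gain of compactness must be extracted from the diffusion $-\delta\Delta$ through the periodic boundary conditions, while simultaneously controlling the nonlocal renewal condition at $a=0$.
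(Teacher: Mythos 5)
Your overall strategy (maximizing sequence, weak compactness of $U$, passage to the limit in the bilinear term $\int_{Q_{a_\dagger}}u_np_n$ via strong convergence of the states) is sound in outline, but the step on which everything hinges --- upgrading $p_n\rightharpoonup p^*$ to strong convergence in $L^2(Q_{a_\dagger})$ --- is left unproved, and the tool you name for it would not work as stated. Aubin--Lions needs a compact embedding of the energy space in the ``spatial'' variables, and here the diffusion smooths only in the one-dimensional variable $x$, while the first-order operator $D=\partial_t+\partial_a$ controls only the directional derivative along characteristics: there is no gain of regularity, hence no uniform equicontinuity of translations, in the age direction transversal to the characteristics. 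Concretely, along the characteristic through $(a_0,0)$ the state solves a parabolic problem with datum $p_0(a_0,\cdot)$, which is merely $L^2$ in $a_0$; any compactness of the family $\{p^u\}_{u\in U}$ in $L^2(Q_{a_\dagger})$ would have to come from a Fr\'echet--Kolmogorov argument combining continuity of translations of the fixed datum $p_0$ with stability estimates uniform in $u$, none of which you supply. As written, therefore, the proof has a genuine gap at its central step.

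The paper sidesteps this issue entirely with a different device, which is the idea your proposal is missing. From $0\le p^{u_N}\le\overline{p}$ it extracts only weak $L^2$ convergence $p^{u_N}\rightharpoonup p^*$, and then invokes Mazur's theorem to produce convex combinations $\widetilde{p}_N=\sum_{i}\lambda_i^N p^{u_i}$ converging \emph{strongly} to $p^*$. The key observation is that, since the state equation is linear in $p$ for each fixed control, $\widetilde{p}_N$ is itself the state corresponding to the admissible control
\[
\widetilde{u}_N=\Bigl(\sum_i\lambda_i^N u_i p^{u_i}\Bigr)\Big/\Bigl(\sum_i\lambda_i^N p^{u_i}\Bigr),
\]
set equal to $\varsigma_1$ where the denominator vanishes; this lies in $U$ because it is a pointwise convex combination of the $u_i$. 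One then passes to the limit with $\widetilde{p}_N\to p^*$ strongly and $\widetilde{u}_N\rightharpoonup u^*$ weakly, and the cost of $\widetilde{u}_N$ equals $\sum_i\lambda_i^N\int_{Q_{a_\dagger}}u_ip^{u_i}\,dt\,dx\,da$, which converges to the optimal value. To salvage your route you must either prove the compactness of the trajectory set honestly or adopt this Mazur-type reconstruction of the controls.
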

\begin{theorem}\label{theorem3}
Let $(u^{*}(a,t,x),p^{*}(a,t,x))$ be an optimal pair for $(OH)$ and $q(a,t,x)$ be the solution of the following system
\begin{equation}\label{eq:000009}
\left\{ \begin{array}{lll}
Dq+ \delta \Delta q-\mu (a)q+ \beta(a)\displaystyle \int_{x-\eta}^{x+\eta}K(x,s)q(0,t,s)ds =- u^{*}q-u^{*},(a,t,x)\in Q_{a_\dagger}, \\
q(a,t,0) =q(a,t,24),\hspace{4.55cm}(a,t)\in (0,a_\dagger)\times (0,T), \\
\partial_x q(a,t,0) =\partial_x q(a,t,24),\hspace{3.8cm}(a,t)\in (0,a_\dagger)\times (0,T), \\
q(a_\dagger,t,x)=0,\hspace{5.8cm}(t,x)\in (0,T)\times (0,24), \\
q(a,T,x)=0 , \hspace{5.85cm}(a,x)\in (0,{a_\dagger} ) \times (0,24).
\end{array}
\right.
\end{equation}
Then, one has
\begin{equation*}
u^{*}(a,t,x)=\left\{
\begin{aligned}
&\varsigma_{1}(a,t,x),\ \ \text{if}\ \ q(a,t,x)>-1,\\
&\varsigma_{2}(a,t,x),\ \ \text{if}\ \ q(a,t,x)<-1.
\end{aligned}
\right.
\end{equation*}
\end{theorem}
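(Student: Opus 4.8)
The plan is to derive the stated characterization from a first-order variational inequality obtained by convexly perturbing the optimal control, and then to remove the state sensitivity from that inequality by means of the adjoint system \eqref{eq:000009}. Throughout I write $p^{*}=p^{u^{*}}$ and use the well-posedness and comparison principle behind Theorem \ref{theorem1}, which in particular provide the continuous dependence estimates needed below and the positivity $p^{*}\ge 0$.

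First I would fix an arbitrary $v\in U$ and, using convexity of $U$, form the admissible family $u^{\varepsilon}=u^{*}+\varepsilon(v-u^{*})\in U$ for $\varepsilon\in[0,1]$. Writing $p^{\varepsilon}=p^{u^{\varepsilon}}$, I would analyse the difference quotient $(p^{\varepsilon}-p^{*})/\varepsilon$ and show that, as $\varepsilon\to 0^{+}$, it converges to a sensitivity $z$ solving the linearized problem
\begin{equation*}
\left\{\begin{array}{ll}
Dz-\delta\Delta z+\mu z-u^{*}z=(v-u^{*})p^{*}, & (a,t,x)\in Q_{a_\dagger},\\
z(a,t,0)=z(a,t,24), & \\
\partial_x z(a,t,0)=\partial_x z(a,t,24), & \\
z(0,t,x)=\int_0^{a_\dagger}\beta(a)\int_{x-\eta}^{x+\eta}K(x,s)z(a,t,s)\,ds\,da, & \\
z(a,0,x)=0. &
\end{array}\right.
\end{equation*}
Existence, uniqueness and the a priori bounds for $z$ follow from the same fixed-point estimates used for Theorem \ref{theorem1}, now applied to the linear equation with source $(v-u^{*})p^{*}$; these estimates are exactly what justify passing to the limit in the difference quotient and identifying its limit with $z$.

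Since $u^{*}$ maximizes the cost, the one-sided Gâteaux derivative of $\varepsilon\mapsto-\int_{Q_{a_\dagger}}u^{\varepsilon}p^{\varepsilon}$ at $\varepsilon=0^{+}$ is nonpositive, which after differentiation reads $-\int_{Q_{a_\dagger}}\bigl[(v-u^{*})p^{*}+u^{*}z\bigr]\le 0$, i.e. $\int_{Q_{a_\dagger}}[(v-u^{*})p^{*}+u^{*}z]\ge 0$. The central step is to rewrite $\int_{Q_{a_\dagger}}u^{*}z$ through the adjoint state. Multiplying the first equation of \eqref{eq:000009} by $z$, integrating over $Q_{a_\dagger}$ and integrating by parts, I would use the periodic conditions in $x$ to discard the $\Delta$ boundary terms; the terminal conditions $q(a_\dagger,t,x)=0$ and $q(a,T,x)=0$ together with $z(a,0,x)=0$ to discard the boundary terms produced by $D=\partial_t+\partial_a$; and the birth condition on $z$ together with the \emph{symmetry} $K(x,s)=K(s,x)$ of the kernel to match the nonlocal term $\beta(a)\int_{x-\eta}^{x+\eta}K(x,s)q(0,t,s)\,ds$ against the contribution arising from $z(0,t,x)$. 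These cancellations reduce the factor multiplying $z$ to exactly $u^{*}$, yielding the duality identity
\[
\int_{Q_{a_\dagger}}u^{*}z=\int_{Q_{a_\dagger}}(v-u^{*})\,p^{*}\,q .
\]

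Substituting this identity into the variational inequality gives, for every $v\in U$,
\[
\int_{Q_{a_\dagger}}(v-u^{*})\,p^{*}\,(1+q)\,da\,dt\,dx\ \ge\ 0,
\]
so that $u^{*}$ minimizes the linear functional $v\mapsto\int_{Q_{a_\dagger}}v\,p^{*}(1+q)$ over $U$. Because $U$ is the order interval $\varsigma_{1}\le v\le\varsigma_{2}$, this minimization is carried out pointwise: at a.e. point where $p^{*}>0$ one takes $v=\varsigma_{1}$ when $p^{*}(1+q)>0$, i.e. when $q>-1$, and $v=\varsigma_{2}$ when $p^{*}(1+q)<0$, i.e. when $q<-1$, which is precisely the asserted form of $u^{*}$. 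I expect the main obstacle to be the rigorous justification of the sensitivity system—the differentiability of the control-to-state map and the convergence of the difference quotients—together with the well-posedness of the backward, nonlocal adjoint problem \eqref{eq:000009}, which I would obtain via the change of variables $a\mapsto a_\dagger-a$, $t\mapsto T-t$ reducing it to a forward problem of the type covered by Theorem \ref{theorem1}.
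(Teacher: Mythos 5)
Your proposal is correct and follows essentially the same route as the paper: a first-order variational inequality from an admissible perturbation of $u^{*}$, convergence of the difference quotients to the linearized state $z$, the duality identity $\int_{Q_{a_\dagger}}u^{*}z=\int_{Q_{a_\dagger}}(v-u^{*})p^{*}q$ obtained by pairing the adjoint system with $z$ (your use of the symmetry of $K$ to cancel the nonlocal terms is the right mechanism, which the paper leaves implicit), and a pointwise minimization over the order interval $[\varsigma_{1},\varsigma_{2}]$. The one step the paper carries out that you leave implicit is the degenerate set $\{p^{*}=0\}$, on which the variational inequality gives no information; the paper disposes of it by showing that modifying $u^{*}$ there leaves the state, and hence the cost, unchanged, so the asserted formula is only ever established off that set.
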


This paper is organized as follows. In Section 2, we prove the existence of solutions and the comparison result for a linear model which is \eqref{eq:01} in general settings.
Section 3 is devoted to the proof of the existence of an optimal control of system \eqref{eq:01} by Mazur's Theorem.
Section 4 focuses on the necessary optimality conditions.

\section{Preliminaries}
\noindent
In this section, we study some properties of the following system, which is \eqref{eq:01} in general settings,
\begin{equation}\label{eq:2.1}
\left\{ \begin{array}{lll}
Dp- \delta \Delta p+\mu (a,t,x)p = f(a,t,x),&(a,t,x)\in Q_{a_\dagger}, \\
p(a,t,0) =p(a,t,24),&(a,t)\in (0,a_\dagger)\times (0,T), \\
\partial_x p(a,t,0) =\partial_x  p(a,t,24),&(a,t)\in (0,a_\dagger)\times (0,T), \\
\displaystyle p(0,t,x)=\displaystyle \int_0^{a_\dagger} \beta(a)\displaystyle \int_{x-\eta}^{x+\eta}K(x,s)p(a,t,s)ds da,&(t,x)\in (0,T)\times (0,24), \\
\displaystyle p(a,0,x)=p_0(a,x) , &(a,x)\in (0,{a_\dagger} ) \times (0,24),
\end{array}
\right.
\end{equation}
where $\beta$, $p_0$ are under the assumptions $(J2)$, $(J3)$, $\mu$ and $f$ satisfy
\begin{equation}\label{mu}
\mu(a,t,x)\in L^{\infty}_{loc}([0,a_{\dagger})\times [0,T]\times [0,24]),\ \mu(a,t,x)\ge 0 \text{ a.e. in $Q_{a_{\dagger}}$},
\end{equation}
\begin{equation*}
 f(a,t,x)\in L^{2}(Q_{a_{\dagger}}), \ \  f(a,t,x)\ge 0 \ \text{a.e. in $Q_{a_{\dagger}}$}.
\end{equation*}
Especially, we prove that there exists a unique solution of system \eqref{eq:2.1} and the comparison principle for system \eqref{eq:2.1}.

Before going further, we need an auxiliary lemma, which can be proved by following the proof of \cite[Lemma A2.7]{As}.

\begin{lemma}\label{lemma2.1}
For any $y_0(x)\in L^2(0,24)$, $g(t,x)\in L^2((0,T)\times(0,24))$, there exists a unique solution $y(t,x)\in L^2((0,T);H^1(0,24))\cup L^2_{loc}((0,T); H^2(0,24))$ of the following system
\begin{equation*}
\left\{ \begin{array}{lll}
\frac{\partial y}{\partial t}(t,x)- \delta\Delta y(t,x) = g(t,x),&(t,x)\in (0,T)\times (0,24), \\
y(t,0) =y(t,24),&t\in (0,T), \\
y'(t,0) =y'(t,24),&t\in (0,T), \\
y(0,x)=y_0(x) , &x\in (0,24).
\end{array}
\right.
\end{equation*}
\end{lemma}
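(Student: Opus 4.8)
The plan is to follow the Galerkin scheme adapted to the periodic setting, as in \cite{As}. First I would introduce the periodic Sobolev space
\[
H^1_{per}(0,24)=\{v\in H^1(0,24):v(0)=v(24)\},
\]
and recast the system in weak form: find $y\in L^2((0,T);H^1_{per}(0,24))$ with $\partial_t y\in L^2((0,T);(H^1_{per})')$ such that
\[
\frac{d}{dt}\int_0^{24} y\,v\,dx+\delta\int_0^{24}\partial_x y\,\partial_x v\,dx=\int_0^{24} g\,v\,dx
\]
for every $v\in H^1_{per}(0,24)$ and a.e.\ $t$, together with $y(0,\cdot)=y_0$. The decisive point here is that after integrating $-\delta\Delta y$ by parts the two periodicity conditions $y(t,0)=y(t,24)$ and $y'(t,0)=y'(t,24)$ make the boundary contribution $[\partial_x y\,v]_0^{24}$ cancel, so no boundary term survives in the variational identity.

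Second, I would construct approximate solutions. The operator $-\delta\,\partial_{xx}$ with periodic boundary conditions is self-adjoint on $L^2(0,24)$ and admits the orthonormal Fourier eigenbasis $\{e_k\}_{k\ge0}$, consisting of the constants and the functions $\cos(2\pi nx/24)$, $\sin(2\pi nx/24)$, with eigenvalues $\lambda_k=\delta(2\pi n/24)^2\ge0$. Seeking $y_N(t,x)=\sum_{k=0}^N c_k^N(t)e_k(x)$ and projecting the equation onto $\mathrm{span}\{e_0,\dots,e_N\}$ reduces the problem to a linear first-order ODE system for the coefficients $c_k^N$, which has a unique absolutely continuous solution by Carath\'eodory theory since $g\in L^2$ and $y_0\in L^2$ fixes the initial data $c_k^N(0)$.

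Third, I would derive the a priori estimates. Testing the Galerkin equation with $y_N$ yields
\[
\frac12\frac{d}{dt}\|y_N\|_{L^2}^2+\delta\|\partial_x y_N\|_{L^2}^2=\int_0^{24} g\,y_N\,dx,
\]
and Young's inequality together with Gronwall's lemma bounds $y_N$ in $L^\infty((0,T);L^2)\cap L^2((0,T);H^1_{per})$ uniformly in $N$; comparing in the equation then bounds $\partial_t y_N$ in $L^2((0,T);(H^1_{per})')$. Weak and weak-$\ast$ compactness give a subsequence converging to a limit $y$, and passing to the limit in the linear Galerkin identity produces a weak solution, with the initial datum recovered from $y\in C([0,T];L^2)$. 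Uniqueness is immediate from linearity, since the difference of two solutions solves the homogeneous problem and the energy identity forces it to vanish.

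Finally, for the interior-in-time $H^2$ regularity I would test with $-\Delta y_N$ (equivalently with the $\lambda_k$-weighted coefficients), using once more that periodicity annihilates all boundary terms, to obtain
\[
\frac12\frac{d}{dt}\|\partial_x y_N\|_{L^2}^2+\delta\|\Delta y_N\|_{L^2}^2=-\int_0^{24} g\,\Delta y_N\,dx.
\]
Multiplying by the weight $t$ and integrating compensates for having only $y_0\in L^2$ rather than $y_0\in H^1$, and yields a uniform bound for $\sqrt{t}\,y_N$ in $L^2((0,T);H^2)$, hence $y\in L^2_{loc}((0,T);H^2(0,24))$ after passing to the limit. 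I expect the main obstacle to be precisely this last step: extracting the $H^2$ estimate only locally in time without extra regularity on $y_0$, which compels the time-weight device and a careful check that the periodic boundary terms genuinely vanish at the second-derivative level as well.
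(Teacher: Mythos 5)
Your proposal is correct and is essentially the argument the paper intends: the paper simply defers to the proof of Lemma A2.7 in Ani\c{t}a's book with the Neumann eigenbasis replaced by the periodic eigenbasis of $-\Delta$ on $(0,24)$, which is exactly the spectral/Galerkin expansion in constants, $\cos(2\pi nx/24)$, $\sin(2\pi nx/24)$ that you carry out, together with the standard energy estimate and the time-weighted estimate for the $L^2_{loc}((0,T);H^2)$ regularity. (The ``$\cup$'' in the statement is evidently a typo for ``$\cap$'', and you have read it correctly.)
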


\begin{remark}
It is known that there exists an orthogonal basis $\{\varphi_j\}_{j\in\mathbb{N}} \subset L^2(0,24)$ and $\{\lambda_j\}\subset \mathbb{R}^+$, $\lambda_0=0$, $\lambda_j\rightarrow +\infty$ as $j\rightarrow +\infty$ such that
\begin{equation*}
\left\{ \begin{array}{lll}
-\Delta \varphi_j(x)=\lambda_j \varphi_j(x), \text{ in $(0,24)$},\\
\varphi_j(0)=\varphi(24),\\
\varphi'_j(0)=\varphi'_j(24).
\end{array}
\right.
\end{equation*}
We can replace the basis in the proof of \cite[Lemma A2.7]{As} by our $\{\varphi_j\}_{j\in\mathbb{N}}$ and follow the same proof to get Lemma \ref{lemma2.1}.
\end{remark}

Let us first deal with the case when $\mu$ satisfies
\begin{description}
\item[(A)] $\mu\in L^{\infty}(Q_{a_{\dagger}})$, $\mu(a,t,x)\ge 0$ a.e. in $Q_{a_{\dagger}}$.
\end{description}

\begin{lemma}\label{lemma1}
For any fixed  $f(a,t,x)\in L^{2}(Q_{{a_\dagger}})$, $b(t,x)\in L^{2}((0,T)\times(0,24))$, there exists a unique solution $p_b(a,t,x)\in L^{2}(Q_{{a_\dagger}})$ of the following system
\begin{equation}\label{eq:111222401}
\left\{ \begin{array}{lll}
Dp- \delta \Delta p+\mu(a,t,x) p = f(a,t,x),&(a,t,x)\in Q_{a_\dagger}, \\
p(a,t,0) =p(a,t,24),&(a,t)\in (0,a_\dagger)\times (0,T), \\
\partial_x p(a,t,0) =\partial_x  p(a,t,24),&(a,t)\in (0,a_\dagger)\times (0,T), \\
\displaystyle p(0,t,x)=b(t,x),&(t,x)\in (0,T)\times (0,24), \\
\displaystyle p(a,0,x)=p_0(a,x) , &(a,x)\in (0,{a_\dagger} ) \times (0,24),
\end{array}
\right.
\end{equation}
where $\mu$ is under $(A)$.
\end{lemma}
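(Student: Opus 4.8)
The plan is to solve \eqref{eq:111222401} by the method of characteristics, reducing the transport--diffusion equation to a family of purely parabolic problems in the $x$--variable to which Lemma \ref{lemma2.1} applies, and then to absorb the bounded zeroth--order term $\mu p$ by a fixed--point argument.

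First I would integrate along the characteristic lines of the operator $D=\partial_t+\partial_a$, i.e. the lines $t-a=\text{const}$. A point $(a,t,x)\in Q_{a_\dagger}$ is reached either from the initial slice $\{t=0\}$ or from the boundary slice $\{a=0\}$, according to the sign of $a-t$, so I split into two cases. For $a>t$, setting $r=a-t\in(0,a_\dagger)$ and $v(s,x)=p(r+s,s,x)$, the function $v$ solves
\[
\partial_s v-\delta\Delta v+\mu(r+s,s,x)v=f(r+s,s,x),\qquad s\in\bigl(0,\min\{T,a_\dagger-r\}\bigr),
\]
with periodic boundary conditions in $x$ and initial datum $v(0,\cdot)=p_0(r,\cdot)$. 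For $a<t$, setting $\tau=t-a\in(0,T)$ and $v(s,x)=p(s,\tau+s,x)$, the function $v$ solves the same type of equation on $s\in(0,\min\{a_\dagger,T-\tau\})$ with initial datum $v(0,\cdot)=b(\tau,\cdot)$. Each of these is a parabolic Cauchy problem with periodic boundary conditions of exactly the form treated in Lemma \ref{lemma2.1}, apart from the extra zeroth--order term $\mu v$.

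To handle $\mu v$ I would move it to the right--hand side and run a contraction argument: given $w\in L^2$, let $\Phi(w)$ be the solution furnished by Lemma \ref{lemma2.1} of the problem with source $f-\mu w$; since $\mu\in L^{\infty}(Q_{a_\dagger})$ the map $w\mapsto\mu w$ is bounded on $L^2$, so on a short characteristic sub--interval $\Phi$ is a contraction (equivalently one introduces the weight $e^{-\lambda s}$ with $\lambda$ large enough to dominate $\|\mu\|_{L^{\infty}}$), and its unique fixed point is the solution $v$ along that characteristic. Patching the characteristic solutions and using the changes of variables $(r,s)\mapsto(r+s,s)$ and $(\tau,s)\mapsto(s,\tau+s)$, both of unit Jacobian, I would assemble $p_b$ and verify, via Fubini together with the $L^2$ bounds coming from Lemma \ref{lemma2.1}, that $p_b\in L^2(Q_{a_\dagger})$ and that it attains the prescribed traces $p(0,t,x)=b$ and $p(a,0,x)=p_0$.

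For uniqueness, by linearity it suffices to show that $f\equiv0$, $b\equiv0$, $p_0\equiv0$ force $p\equiv0$. This follows either directly from the characteristic representation (each $v$ then solves a homogeneous parabolic problem with zero data, hence vanishes) or from a standard energy estimate: multiplying the equation by $p$, integrating over $Q_{a_\dagger}$, discarding the boundary contributions from $\Delta$ by periodicity, and using $\mu\ge0$, one obtains $\|p\|_{L^2}^2\le0$. The main obstacle I anticipate is precisely this zeroth--order term: because $\mu$ depends on $x$ it cannot be eliminated by an exponential substitution, so its control rests entirely on the fixed--point step and on checking that the per--characteristic $L^2$ estimates integrate up to a global bound on $Q_{a_\dagger}$; the measurability of the assembled solution in the cohort parameter and the rigorous justification of the trace at $a=0$ are the points that will require care.
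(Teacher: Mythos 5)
Your proposal is correct and follows essentially the same route as the paper: reduce the equation along the characteristics $t-a=\mathrm{const}$ to periodic parabolic Cauchy problems handled by Lemma \ref{lemma2.1}, and absorb the bounded zeroth--order term $\mu p$ by freezing it and running a contraction argument. The only cosmetic difference is that you contract per characteristic (or with an exponential weight in $s$), whereas the paper contracts the map $q\mapsto p_{b,q}$ globally on $L^2(Q_{a_\dagger})$ for small $T$ and then iterates in $t$; both yield the same existence, uniqueness and $L^2$ bound.
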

\begin{proof}
Fix any $q(a,t,x)\in L^{2}(Q_{{a_\dagger}})$, we first prove that the following system has a unique solution $p_{b,q}(a,t,x)$,
\begin{equation}\label{eq:1112224}
\left\{ \begin{array}{lll}
Dp- \delta \Delta p+\mu q = f,&(a,t,x)\in Q_{a_\dagger}, \\
p(a,t,0) =p(a,t,24),&(a,t)\in (0,a_\dagger)\times (0,T), \\
\partial_x p(a,t,0) =\partial_x  p(a,t,24),&(a,t)\in (0,a_\dagger)\times (0,T), \\
\displaystyle p(0,t,x)=b(t,x),&(t,x)\in (0,T)\times (0,24), \\
\displaystyle p(a,0,x)=p_0(a,x) , &(a,x)\in (0,{a_\dagger} ) \times (0,24).
\end{array}
\right.
\end{equation}
Let $S$ be an arbitrary characteristic line of equation
\[
S=\{(a_{0}+s,t_{0}+s);s\in(0,\alpha)\},
\]
where $(a_0,t_0)\in \{0\}\times(0,T)\cup (0,a_{\dagger})\times \{0\}$ and $(a_0+\alpha,t_0+\alpha) \in \{a_{\dagger}\}\times (0,T) \cup (0,a_{\dagger})\times \{T\}$ and define
\begin{equation}\label{eq:111}
\left\{ \begin{array}{lll}
\widetilde{p}(s,x) = p(a_{0}+s,t_{0}+s,x),&(s,x)\in (0,\alpha)\times(0,24), \\
\widetilde{q}(s,x) = q(a_{0}+s,t_{0}+s,x),&(s,x)\in (0,\alpha)\times(0,24), \\
\widetilde{f}(s,x) = f(a_{0}+s,t_{0}+s,x),&(s,x)\in (0,\alpha)\times(0,24), \\
\widetilde{\mu}(s,x) = \mu(a_{0}+s,t_{0}+s,x),&(s,x)\in (0,\alpha)\times(0,24).
\end{array}
\right.
\end{equation}
According to Lemma \ref{lemma2.1}, the following system admits a unique solution $\widetilde{p}\in L^2((0,\alpha);H^1(0,24))\cap {L^{2}}_{loc}((0,\alpha);H^{2}(0,24))$,
\begin{equation}\label{eq:222}
\begin{cases}
\frac {\partial \widetilde{p}}{\partial s}- \delta \Delta \widetilde{p} =\widetilde{f}-\widetilde{\mu}\widetilde{q},&(s,x)\in (0,\alpha)\times(0,24), \\
\partial _{x} \widetilde{p}(s,0)=\partial _{x} \widetilde{p}(s,24),&s\in (0,\alpha), \\
\widetilde{p}(s,0)= \widetilde{p}(s,24),&s\in (0,\alpha), \\
\widetilde{p}(0,x)=\left\{
\begin{aligned}
&b(t_{0},x),\ \ \ \  a_{0}=0,\ \  x \in (0,24),\\
&p_{0}(a_{0},x),\ \ \ t_{0}=0,\ \ x \in (0,24).
\end{aligned}
\right.
\end{cases}
\end{equation}
In fact, multiplying the first equation of system \eqref{eq:222} by $\widetilde{p}$ and integrating on $(0,s)\times(0,24)$, one has
\[\|\widetilde{p}(s)\|^2_{L^{2}(0,24)}\leq \|\widetilde{p}(0)\|^2_{L^{2}(0,24)}+\|\widetilde{f}-\widetilde{\mu}\widetilde{q}\|^2_{L^{2}((0,\alpha)\times(0,24))}+\displaystyle \int_0^{s}\|\widetilde{p}(\tau)\|^2_{L^{2}(0,24)}d\tau.\]
Then by a lemma from Bellman (see in Appendix) we get
\begin{equation}\label{eq:2.6}
\|\widetilde{p}(s)\|^2_{L^{2}(0,24)}\leq C(\|\widetilde{p}(0)\|^2_{L^{2}(0,24)}+\|\widetilde{f}-\widetilde{\mu}\widetilde{q}\|^2_{L^{2}((0,\alpha)(0,24))})e^{\alpha},\ \forall s \in[0,\alpha]
\end{equation}
Now let us denote
\[
p_{b,q}(a_{0}+s,t_{0}+s,x)=\widetilde{p}(s,x),\ (s,x)\in (0,\alpha)\times(0,24)
\]
for any characteristic line $S$. It follows from Lemma \ref{lemma2.1} and \eqref{eq:2.6} that $p_{b,q}\in  L^{2}(S;H^{1}(0,24))\cap {L^{2}}_{loc}(S;H^{2}(0,24))$
for almost any characteristic line $S$, and $p_{b,q}$ satisfies
\begin{equation}\label{eq:20202}
\left\{ \begin{array}{lll}
Dp_{b,q}- \delta \Delta p_{b,q}+\mu (a,t,x)q = f(a,t,x),&(a,t,x)\in Q_{a_\dagger}, \\
p_{b,q}(a,t,0) =p_{b,q}(a,t,24),&(a,t)\in (0,a_\dagger)\times (0,T), \\
\partial_x p_{b,q}(a,t,0) =\partial_x p_{b,q}(a,t,24),&(a,t)\in (0,a_\dagger)\times (0,T), \\
\displaystyle p_{b,q}(0,t,x)=b(t,x),&(t,x)\in (0,T)\times (0,24), \\
\displaystyle p_{b,q}(a,0,x)=p_0(a,x) , &(a,x)\in (0,{a_\dagger} ) \times (0,24).
\end{array}
\right.
\end{equation}

Now we show that $p_{b,q}(a,t,x)\in L^{2}(Q_{a_\dagger})$. It is known that there exists an orthonormal basis $\{\varphi_j\}_{j\in\mathbb{N}} \subset L^2(0,24)$ and $\{\lambda_j\}\subset \mathbb{R}^+$, $\lambda_0=0$, $\lambda_j\rightarrow +\infty$ as $j\rightarrow +\infty$ such that
\begin{equation*}
\left\{ \begin{array}{lll}
-\Delta \varphi_j(x)=\lambda_j \varphi_j(x), \text{ in $(0,24)$},\\
\varphi_j(0)=\varphi(24),\\
\varphi'_j(0)=\varphi'_j(24).
\end{array}
\right.
\end{equation*}
Then, one has that
$$f(a,t,x)-\mu(a,t,x) q=\sum_{j=1}^{\infty} v^j(a,t)\varphi_j(x),\text{in $L^2(0,24)$, a.e. $(a,t)\in(0,a_{\dagger})\times(0,T)$},$$
$$b(t,x)=\sum_{j=1}^{\infty} b^j(t)\varphi_j(x), \text{ in $L^2(0,24)$, a.e. $t\in(0,T)$},$$
$$p_0(a,x)=\sum_{j=1}^{\infty} p_0^j(a)\varphi_j(x), \text{ in $L^2(0,24)$, a.e. $a\in(0,a_{\dagger})$}.$$
Furthermore, $p_{b,q}(a,t,x)$ has the following expression
$$p_{b,q}(a,t,x):=\sum_{j=1}^{\infty} p_{b,q}^j(a,t)\varphi_j(x),\text{ in $L^2(0,24)$ a.e. $(a,t)\in(0,a_{\dagger})\times(0,T)$}.$$
By substituting it into \eqref{eq:20202}, one gets that $p_{b,q}^j(a,t)$ satisfies
\begin{equation*}
\left\{ \begin{array}{lll}
Dp_{b,q}^j+\lambda_j \delta p_{b,q}^j=v^j(a,t), &(a,t)\in (0,a_{\dagger})\times(0,T),\\
p_{b,q}^j(a,t)\varphi_j(0)=p_{b,q}^j(a,t)\varphi_j(24),&(a,t)\in (0,a_{\dagger})\times(0,T),\\
p_{b,q}^j(a,t)\varphi_j^{'}(0)=p_{b,q}^j(a,t)\varphi_j^{'}(24),&(a,t)\in (0,a_{\dagger})\times(0,T),\\
p_{b,q}^j(0,t)=b^j(t), &t\in(0,T)\\
p_{b,q}^j(a,0)=p_0^j(a), &a\in(0,a_{\dagger}).
\end{array}
\right.
\end{equation*}
One can follow the computation of Lemma 4.1 in Ani\c{t}a $\cite [p_{.} 113-114]{As}$ and get that $p_{b,q}(a,t,x)\in L^{2}(Q_{a_\dagger})$ satisfies
 \begin{align}\label{eq:0001167}
\|p_{b,q}\|^{2}_{L^{2}(Q_{{a_\dagger}})}\leq  e^{T}(\|p_{0}\|^{2}_{L^{2}((0,a_\dagger)\times (0,24))}+\|b\|^{2}_{L^{2}((0,T)\times (0,24))}+\|f-\mu q\|^{2}_{L^{2}(Q_{a_\dagger})}).
\end{align}

For an arbitrary $q(a,t,x)\in L^{2}(Q_{{a_\dagger}})$, we have obtained that system \eqref{eq:1112224} has a solution $p_{b,q}\in L^2(Q_{a_{\dagger}})$.
Let us set a mapping $\Pi: L^2(Q_{a_\dagger})\rightarrow L^2(Q_{a_\dagger})$ by $\Pi (q_{i}(a,t,x))={p_{b,q}}_{i}(a,t,x)$. Take any two functions $q_1$, $q_2\in L^2(Q_{a_\dagger})$ and then $p_{b,q_1}-p_{b,q_2}$ satisfies
\begin{equation}\label{eq:202022233}
\left\{ \begin{array}{lll}
D({p_{b,q}}_{1}-{p_{b,q}}_{2})- \delta \Delta({p_{b,q}}_{1}-{p_{b,q}}_{2})+\mu (q_{1}-q_{2}) = 0,&(a,t,x)\in Q_{a_\dagger}, \\
({p_{b,q}}_{1}-{p_{b,q}}_{2})(a,t,0) =({p_{b,q}}_{1}-{p_{b,q}}_{2})(a,t,24),&(a,t)\in (0,a_\dagger)\times (0,T), \\
\partial_x ({p_{b,q}}_{1}-{p_{b,q}}_{2})(a,t,0) =\partial_x ({p_{b,q}}_{1}-{p_{b,q}}_{2})(a,t,24),&(a,t)\in (0,a_\dagger)\times (0,T), \\
({p_{b,q}}_{1}-{p_{b,q}}_{2})(0,t,x)=0,&(t,x)\in (0,T)\times (0,24), \\
({p_{b,q}}_{1}-{p_{b,q}}_{2})(a,0,x)=0,&(a,x)\in (0,{a_\dagger} ) \times (0,24).
\end{array}
\right.
\end{equation}
By the result of \eqref{eq:0001167}, one has
\begin{align*}
\|{p_{b,q}}_{1}-{p_{b,q}}_{2}\|^{2}_{L^{2}(Q_{{a_\dagger}})}
\leq  e^{T}(\|\mu (q_{1}-q_{2})\|^{2}_{L^{2}(Q_{a_\dagger})}), \ in \ L^{2}(Q_{a_\dagger}).
\end{align*}
Obviously, when $T$ is small enough, $p_{b,q}(a,t,x)$ is a contraction mapping with respect to $q(a,t,x)$. Consequently, there exists a unique solution $p_{b}$ of system \eqref{eq:111222401} for sufficient small $T$. However, one can extend $T$ by following previous steps for $t\in(T,2T)$.
Thus, system \eqref{eq:111222401} has a unique solution $p_{b}\in L^{2}(Q_{a_\dagger})$.
\end{proof}

One can follow the same idea of the proof of \cite[Lemma 4.1.2]{As} to get the following Lemma.
\begin{lemma}\label{lemma2.2}
For any $b_1(t,x)$, $b_2(t,x)\in L^2((0,T)\times(0,24))$, $0\le b_1(t,x)\le b_2(t,x)$ a.e. in $(0,T)\times(0,24)$, one has
$$0\le p_{b_1}(a,t,x)\le p_{b_2}(a,t,x), \text{ a.e. in $Q_{a_{\dagger}}$},$$
where $p_{b_1}(a,t,x)$ and $p_{b_2}(a,t,x)$ are the solutions of system \eqref{eq:111222401} under $(A)$ with $b_1(a,t,x)$ and $b_2(a,t,x)$ respectively.
\end{lemma}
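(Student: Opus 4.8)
The plan is to establish the comparison principle by reducing the problem to a statement along characteristic lines, where the periodic Laplacian problem of Lemma~\ref{lemma2.1} governs the dynamics, and then invoking a maximum principle for the one-dimensional periodic heat operator. First I would set $w = p_{b_2} - p_{b_1}$ and write down the system it satisfies: subtracting the two copies of \eqref{eq:111222401}, $w$ solves the same linear equation $Dw - \delta\Delta w + \mu w = 0$ with periodic boundary conditions in $x$, with initial datum $w(a,0,x)=0$, and with birth datum $w(0,t,x) = b_2(t,x) - b_1(t,x) \ge 0$. Thus it suffices to prove that nonnegative data (both the initial age-profile, here zero, and the renewal boundary term $b_2-b_1\ge 0$) force $w \ge 0$ a.e., together with the upper bound $p_{b_1}\ge 0$ obtained by comparing $p_{b_1}$ against the zero solution in the same way.

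Next I would exploit the characteristic decomposition already used in Lemma~\ref{lemma1}. Along each characteristic line $S=\{(a_0+s,t_0+s)\}$, the transported function $\widetilde w(s,x) = w(a_0+s,t_0+s,x)$ satisfies a genuine parabolic problem
\begin{equation*}
\frac{\partial \widetilde w}{\partial s} - \delta\Delta \widetilde w + \widetilde\mu\, \widetilde w = 0,
\end{equation*}
with periodic conditions $\widetilde w(s,0)=\widetilde w(s,24)$, $\partial_x\widetilde w(s,0)=\partial_x\widetilde w(s,24)$, and initial value $\widetilde w(0,x)$ equal either to $p_0$-difference (zero) when $t_0=0$ or to $(b_2-b_1)(t_0,x)\ge 0$ when $a_0=0$. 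The key step is then to prove the scalar maximum principle: if $\widetilde w(0,\cdot)\ge 0$ and $\widetilde\mu\ge 0$, then $\widetilde w(s,\cdot)\ge 0$ for all $s$. The clean way is to test the equation against the negative part $\widetilde w^- = \max\{-\widetilde w,0\}$: multiplying by $-\widetilde w^-$ and integrating over $(0,24)$, the diffusion term yields $\delta\int_0^{24}|\partial_x\widetilde w^-|^2\,dx \ge 0$ (here periodicity is essential so that the boundary terms in the integration by parts cancel), and the zero-order term contributes $\int_0^{24}\widetilde\mu\,|\widetilde w^-|^2\,dx\ge 0$, giving
\begin{equation*}
\frac{1}{2}\frac{d}{ds}\|\widetilde w^-(s)\|_{L^2(0,24)}^2 \le 0.
\end{equation*}
Since $\|\widetilde w^-(0)\|_{L^2(0,24)}=0$, Gronwall forces $\widetilde w^-\equiv 0$, i.e. $\widetilde w\ge 0$ along the characteristic. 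Undoing the change of variables and collecting over almost every characteristic line yields $w\ge 0$ a.e. in $Q_{a_\dagger}$, and the same argument with $b_1\ge 0$ compared to the identically zero boundary datum gives $p_{b_1}\ge 0$.

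I expect the main obstacle to be a regularity/consistency subtlety rather than the algebra of the energy estimate. The renewal (birth) datum $b_2-b_1$ is only $L^2$ in $(t,x)$, so the restriction $(b_2-b_1)(t_0,\cdot)$ to a fixed characteristic starting time is defined only for a.e. $t_0$, and the sign $\widetilde w(0,\cdot)\ge 0$ must be read in the $L^2(0,24)$ sense; I would handle this exactly as in Lemma~\ref{lemma1}, asserting the characteristic-wise solution and estimate \eqref{eq:2.6} hold for almost every $S$, which is enough since the conclusion is an a.e. statement. A second point to watch is that the periodic boundary conditions are exactly what make the integration-by-parts boundary terms vanish, so one cannot borrow an off-the-shelf Dirichlet/Neumann maximum principle; instead the energy method on $\widetilde w^-$ with the periodic eigenbasis $\{\varphi_j\}$ gives the cleanest justification, and indeed the referenced argument of \cite[Lemma 4.1.2]{As} adapts verbatim once the basis is replaced by the periodic one as in the Remark following Lemma~\ref{lemma2.1}.
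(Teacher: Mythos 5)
Your proposal is correct and is essentially the argument the paper intends: the paper proves this lemma only by deferring to \cite[Lemma 4.1.2]{As}, and the standard proof there is exactly your reduction along characteristics followed by testing with the negative part $\widetilde w^{-}$, with periodicity replacing the Dirichlet/Neumann boundary terms as you note. One small imprecision: the nonnegativity $p_{b_1}\ge 0$ is not obtained by ``comparison with the zero solution'' (which is not a solution here) but by running the same $\widetilde p^{-}$ test on $p_{b_1}$ itself, where the standing sign assumptions $f\ge 0$ and $p_0\ge 0$ give $\int \widetilde f(-\widetilde p^{-})\,dx\le 0$ and hence the same conclusion.
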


\begin{lemma}\label{lemma2}
There exists a unique solution $p(a,t,x)\in L^2(Q_{a_\dagger})$ of system \eqref{eq:2.1} under $(A)$.
\end{lemma}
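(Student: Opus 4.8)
The plan is to reduce the nonlocal renewal problem \eqref{eq:2.1} to the inhomogeneous problem \eqref{eq:111222401} already solved in Lemma \ref{lemma1}, via a fixed-point argument on the birth term. For any $b\in L^2((0,T)\times(0,24))$, Lemma \ref{lemma1} produces a unique $p_b\in L^2(Q_{a_\dagger})$ solving \eqref{eq:111222401}, and by the Cauchy--Schwarz bound used below the quantity
\[
\Phi(b)(t,x)=\int_0^{a_\dagger}\beta(a)\int_{x-\eta}^{x+\eta}K(x,s)\,p_b(a,t,s)\,ds\,da
\]
again lies in $L^2((0,T)\times(0,24))$, so $\Phi$ is a well-defined operator on that space. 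The point is that $p$ solves \eqref{eq:2.1} if and only if its trace $b=p(0,\cdot,\cdot)$ is a fixed point of $\Phi$ (with $p=p_b$). Hence both existence and uniqueness for \eqref{eq:2.1} reduce to the existence and uniqueness of a fixed point of $\Phi$, which I would obtain by the Banach fixed-point theorem.

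The central step is a contraction estimate for $\Phi$, first for small $T$. Given $b_1,b_2$, the difference $w=p_{b_1}-p_{b_2}$ solves \eqref{eq:111222401} with $f=0$, $p_0=0$ and boundary datum $b_1-b_2$ at $a=0$. Integrating along characteristics as in the proof of Lemma \ref{lemma1} and crucially using $\mu\ge0$ from hypothesis $(A)$, the characteristics issued from $t=0$ carry zero data and contribute nothing, while on a characteristic issued from $(0,t_0)$ the energy identity gives $\tfrac{d}{ds}\|\tilde w(s)\|^2_{L^2(0,24)}\le0$, hence $\|\tilde w(s)\|^2_{L^2(0,24)}\le\|(b_1-b_2)(t_0,\cdot)\|^2_{L^2(0,24)}$. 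Summing over the characteristic foliation (whose Jacobian is $1$) and bounding each characteristic length by $T$ yields
\[
\|p_{b_1}-p_{b_2}\|^2_{L^2(Q_{a_\dagger})}\le T\,\|b_1-b_2\|^2_{L^2((0,T)\times(0,24))}.
\]

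Next I would bound the birth operator itself: applying Cauchy--Schwarz over $(0,a_\dagger)\times(x-\eta,x+\eta)$ and using the boundedness of $\beta$ (hypothesis $(J2)$) and of $K$, one gets $\|\Phi(b_1)-\Phi(b_2)\|^2_{L^2((0,T)\times(0,24))}\le C\,\|p_{b_1}-p_{b_2}\|^2_{L^2(Q_{a_\dagger})}$ with $C$ depending only on $\|\beta\|_{\infty}$, $\|K\|_{\infty}$, $a_\dagger$ and $\eta$. Combining the two estimates gives $\|\Phi(b_1)-\Phi(b_2)\|^2\le CT\,\|b_1-b_2\|^2$, so $\Phi$ is a contraction as soon as $CT<1$. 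The Banach fixed-point theorem then supplies a unique $b^{*}$, and $p=p_{b^{*}}$ is the unique $L^2(Q_{a_\dagger})$ solution of \eqref{eq:2.1} on the short time interval.

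The main obstacle, and the last step, is removing the smallness restriction on $T$. As in Lemma \ref{lemma1}, I would iterate the construction on consecutive slabs $(kT_0,(k+1)T_0)$ with $T_0<1/C$, feeding the already-constructed solution's trace $p(\cdot,kT_0,\cdot)$ in as the initial datum for the next slab. Because the contraction constant $C$ is independent of the initial data, the same $T_0$ works on every slab and finitely many steps exhaust $(0,T)$. The only genuine care needed is to verify that the traces match across slab interfaces so that the glued function is a bona fide solution in $L^2(Q_{a_\dagger})$; this is routine from the characteristic representation, since the characteristic ODE data are continuous across $t=kT_0$ by construction.
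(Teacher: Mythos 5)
Your proof is correct and follows the same basic reduction as the paper: both arguments treat the birth term as an unknown boundary datum $b$, invoke Lemma \ref{lemma1} to define the map $b\mapsto p_b$, and seek a fixed point of the induced operator on $L^2((0,T)\times(0,24))$. Where you diverge is in how the contraction is obtained. The paper equips $L^2((0,T)\times(0,24))$ with the Bielecki-type weighted norm $\|b\|=\bigl(\int_0^T e^{-\lambda t}\|b(t)\|^2_{L^2(0,24)}\,dt\bigr)^{1/2}$ and uses the estimate $\int_0^T e^{-\lambda t}\|v(t)\|^2\,dt\le \lambda^{-1}\int_0^T e^{-\lambda t}\|b_1(t)-b_2(t)\|^2\,dt$, so that choosing $\lambda>C\int_0^{a_\dagger}\beta^2(a)\,da$ makes $\mathscr{F}$ a contraction on the \emph{whole} interval $(0,T)$ at once; no time-stepping is needed. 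You instead work with the unweighted norm, obtain the factor $T$ from the characteristic length, and must then iterate over slabs $(kT_0,(k+1)T_0)$ and match traces at the interfaces --- a step you correctly flag as the delicate point and which requires the continuity-in-$t$ of $p(\cdot,t,\cdot)$ with values in $L^2$ coming from the characteristic representation (the paper performs the analogous gluing only inside Lemma \ref{lemma1}). Both routes are sound; the weighted norm buys a one-shot argument with no gluing, while your version is more elementary in that it avoids introducing an equivalent norm, at the cost of the bookkeeping across slabs.
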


\begin{proof}
Let us define an operator $\mathscr{F}: L^2((0,T)\times (0,24))\rightarrow L^2((0,T)\times (0,24))$ by
\[
(\mathscr{F}b)(t,x)=\displaystyle \int_0^{a_\dagger} \beta(a)\displaystyle \int_{x-\eta}^{x+\eta}K(x,s)p_{b}(a,t,s)ds da,\ \text{ a.e. in } (0,T)\times (0,24).
\]
For any fixed $b_{i}\in L^2((0,T)\times (0,24))$ $(i=1,2)$, let $p_{b_{1}},p_{b_{2}}\in L^2(Q_{a_{\dagger}})$ be the solutions of system \eqref{eq:2.1} with $b_1(a,t,x)$, $b_2(a,t,x)$ respectively. Let $v(a,t,x)=p_{b_{1}}(a,t,x)-p_{b_{2}}(a,t,x)$ and then $v(a,t,x)$ satisfies
\begin{equation}\label{eq:102345}
\left\{ \begin{array}{lll}
Dv- \delta \Delta v+\mu (a,t,x)v = 0,&(a,t,x)\in Q_{a_\dagger}, \\
v(a,t,0) =v(a,t,24),&(a,t)\in (0,a_\dagger)\times (0,T), \\
\partial_x v(a,t,0) =\partial_x  v(a,t,24),&(a,t)\in (0,a_\dagger)\times (0,T), \\
v(0,t,x)=b_{1}(t,x)-b_{2}(t,x),&(t,x)\in (0,T)\times (0,24), \\
v(a,0,x)=0 , &(a,x)\in (0,{a_\dagger} ) \times (0,24).
\end{array}
\right.
\end{equation}
Then it follows by the computation of Lemma 4.1 in Ani\c{t}a $\cite [p_{.} 116]{As}$ that
\begin{align*}
\int_0^T e^{-\lambda t} \|v(t)\|^2_{L^2((0,a_{\dagger})\times(0,24))} dt \le \frac{1}{\lambda} \int_0^T e^{-\lambda t} \|b_1(t)-b_2(t)\|^2_{L^2(0,24)} dt
\end{align*}
for any $\lambda>0$. Consider $L^2((0,T)\times (0,24))$ with the norm
$$\|b\|=\left(\int_0^T e^{-\lambda t} \|b(t)\|^2_{L^2(0,24)} dt \right)^2, \text{ for any $b\in L^2((0,T)\times(0,24))$}.$$
Then one has
\begin{align*}
&\|\mathscr{F}b_{1}-\mathscr{F}b_{2}\|^{2}\\
=&\displaystyle \int_0^{T} e^{-\lambda t}\|\displaystyle \int_0^{a_\dagger} \beta(a)\displaystyle \int_{x-\eta}^{x+\eta}K(x,s)(p_{b_{1}}(a,t,s)-p_{b_{2}}(a,t,s))ds da\|^{2}_{L^{2}(0,24)}dt\\
\leq & C \displaystyle \int_0^{a_\dagger} \beta^{2}(a)da \displaystyle \int_0^{T} e^{-\lambda t}\|v(t)\|^{2}_{L^2((0,a_\dagger)\times (0,24))}dt\\
\leq & \frac{C}{\lambda} \displaystyle \int_0^{a_\dagger} \beta^{2}(a)da\|b_1-b_2\|^{2},
\end{align*}
where $C$ is an appropriate positive constant related to $K(x,s)$.
One can choose $\lambda$ large such that $\lambda >C \displaystyle \int_0^{a_\dagger} \beta^{2}(a)da $ and then $\mathscr{F}$ is a contraction mapping on $L^2((0,T)\times (0,24))$ with the norm $\|\cdot\|$. This completes the proof.
\end{proof}

From Lemma \ref{lemma2}, one gets that the operator $\mathscr{F}$ is a contraction mapping. Moreover, combined with Lemma \ref{lemma2.2}, one can follow the rest of the proof of \cite[Lemma 4.1.1]{As} to get the following comparison principle for \eqref{eq:2.1}.

\begin{lemma}\label{lemma3}
If $p_{i}(i\in {1,2})$ are the solutions of the following systems
\begin{equation*}
\left\{ \begin{array}{lll}
Dp_{i}- \delta \Delta p_{i}+\mu_{i} (a,t,x)p_{i} = f_{i},&(a,t,x)\in Q_{a_\dagger}, \\
p_{i}(a,t,0) =p_{i}(a,t,24),&(a,t)\in (0,a_\dagger)\times (0,T), \\
\partial_x p_{i}(a,t,0) =\partial_x  p_{i}(a,t,24),&(a,t)\in (0,a_\dagger)\times (0,T), \\
\displaystyle p_{i}(0,t,x)=\displaystyle \int_0^{a_\dagger} \beta_{i}(a)\displaystyle \int_{x-\eta}^{x+\eta}K(x,s)p_{i}(a,t,s)ds da,&(t,x)\in (0,T)\times (0,24), \\
\displaystyle p_{i}(a,0,x)=p_{0i}(a,x) , &(a,x)\in (0,{a_\dagger} ) \times (0,24),
\end{array}
\right.
\end{equation*}
where $\mu_{1}\geq\mu_{2}$, $f_{1}\leq f_{2}$, $\beta_{1}\leq\beta_{2}$, $p_{01}\leq  p_{02}$ and $\mu_1$, $\mu_2$ satisfy $(A)$,
then
\[
0\leq p_{1}(a,t,x)\leq p_{2}(a,t,x) \ \ a.e. \  in \  Q_{a_\dagger}.
\]
\end{lemma}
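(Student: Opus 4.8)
The plan is to exploit the fixed-point structure already set up in Lemmas \ref{lemma1}--\ref{lemma2}: for each index $i$ the solution $p_i$ of the full system equals $p_{b_i^*}^{(i)}$, where $p_b^{(i)}$ denotes the unique solution of the linear problem \eqref{eq:111222401} built from the data $(\mu_i,f_i,p_{0i})$ with the birth term frozen to $b$, and $b_i^*$ is the unique fixed point of the contraction $\mathscr{F}_i b=\int_0^{a_\dagger}\beta_i(a)\int_{x-\eta}^{x+\eta}K(x,s)p_b^{(i)}(a,t,s)\,ds\,da$. First I would record the nonnegativity $p_i\ge 0$. Since $f_i\ge0$, $p_{0i}\ge0$ and $\beta_i,K\ge0$, the operator $\mathscr{F}_i$ maps nonnegative functions to nonnegative functions and, by Lemma \ref{lemma2.2}, is monotone. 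Starting the iteration from $b_0=0$, the sequence $\mathscr{F}_i^{\,n}(0)$ is nonnegative and nondecreasing; as $\mathscr{F}_i$ is a contraction it converges in $L^2$ to $b_i^*$, so (along a subsequence, a.e.) $b_i^*\ge0$, and then $p_i=p_{b_i^*}^{(i)}\ge0$ by the lower bound in Lemma \ref{lemma2.2}.

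The heart of the argument is a comparison for the frozen-birth problems that allows the coefficients themselves to differ: if $0\le b\le \tilde b$, $\mu_1\ge\mu_2$, $f_1\le f_2$ and $p_{01}\le p_{02}$, then $p_b^{(1)}\le p_{\tilde b}^{(2)}$. To see this set $w=p_{\tilde b}^{(2)}-p_b^{(1)}$ and rewrite the difference of the two state equations by grouping the zeroth-order terms around $\mu_2$:
\begin{equation*}
Dw-\delta\Delta w+\mu_2 w=(f_2-f_1)+(\mu_1-\mu_2)\,p_b^{(1)}.
\end{equation*}
The right-hand side is nonnegative because $f_2\ge f_1$, $\mu_1\ge\mu_2$ and $p_b^{(1)}\ge0$ (from the previous step), while $w$ carries the nonnegative boundary datum $\tilde b-b$ and the nonnegative initial datum $p_{02}-p_{01}$. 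Hence $w$ solves a linear problem of the type \eqref{eq:111222401} with coefficient $\mu_2$ (which satisfies $(A)$) and entirely nonnegative data, so the positivity statement of Lemma \ref{lemma2.2} gives $w\ge0$, i.e. $p_b^{(1)}\le p_{\tilde b}^{(2)}$.

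With this strengthened comparison two consequences follow. Taking $\tilde b=b$ shows $p_b^{(1)}\le p_b^{(2)}$ for every $b\ge0$, which together with $\beta_1\le\beta_2$ and $K\ge0$ yields the pointwise domination $\mathscr{F}_1 b\le\mathscr{F}_2 b$ on nonnegative $b$; each $\mathscr{F}_i$ is also monotone by Lemma \ref{lemma2.2}. An induction on the iterates started at $0$ then gives $\mathscr{F}_1^{\,n}(0)\le\mathscr{F}_2^{\,n}(0)$ for all $n$, and passing to the $L^2$-limit (a.e. along a subsequence) produces $b_1^*\le b_2^*$. Feeding $b=b_1^*\le b_2^*=\tilde b$ into the strengthened comparison finally yields $p_1=p_{b_1^*}^{(1)}\le p_{b_2^*}^{(2)}=p_2$, and combined with the nonnegativity of $p_1$ this is exactly $0\le p_1\le p_2$.

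I expect the delicate point to be the regrouping step producing the displayed equation for $w$: it is essential that the zeroth-order coefficient retained is $\mu_2$ rather than $\mu_1$, so that the leftover term $(\mu_1-\mu_2)p_b^{(1)}$ lands on the nonnegative side, and this in turn relies on having established $p_b^{(1)}\ge0$ beforehand. The only other care needed is the justification that $L^2$ convergence of the monotone iterates preserves the inequalities almost everywhere, which is handled by extracting an a.e.-convergent subsequence.
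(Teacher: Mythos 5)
Your proof is correct and follows essentially the same route the paper takes: the paper's own argument for Lemma \ref{lemma3} consists of combining the contraction property of $\mathscr{F}$ (Lemma \ref{lemma2}) with the monotonicity of $b\mapsto p_b$ (Lemma \ref{lemma2.2}) along the lines of Ani\c{t}a's Lemma 4.1.1, which is exactly the monotone-iteration scheme you carry out. The one step you make explicit that the paper delegates to the citation is the cross-coefficient comparison obtained by grouping the zeroth-order term around $\mu_2$ so that $(\mu_1-\mu_2)p_b^{(1)}$ lands on the nonnegative side, and you handle it correctly.
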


By referring to the proof of \cite[Theorem 4.1.3, Theorem 4.1.4]{As} for the case when $\mu(a,t,x)$ satisfies \eqref{mu}, one can define
$$\mu^N(a,t,x)=\min\{\mu(a,t,x),N\}, \text{ for any $N\in\mathbb{N}^+$},$$
and denote $p_N(a,t,x)$ to be the solution of system \eqref{eq:2.1} with $\mu_N$. Passing to the limit as $N\rightarrow +\infty$ for $p_N(a,t,x)$, one can get the solution of system \eqref{eq:2.1}. Then by the results of Lemma \ref{lemma2} and Lemma \ref{lemma3}, we can get the following lemma.

\begin{lemma}\label{lemma2.7}
There is a unique solution $p(a,t,x)\in L^2(Q_{a_{\dagger}})$ of system \eqref{eq:2.1} with $\mu$ satisfying \eqref{mu}. If $p_{i}(i\in {1,2})$ are the solutions of system \eqref{eq:2.1} with $\mu_1$, $f_1$, $\beta_1$, $p_{01}$ and $\mu_2$, $f_2$, $\beta_2$, $p_{02}$ respectively ($\mu_1$, $\mu_2$ satisfy \eqref{mu}) and $\mu_{1}\geq\mu_{2}$, $f_{1}\leq f_{2}$, $\beta_{1}\leq\beta_{2}$, $p_{01}\leq  p_{02}$, then
\[
0\leq p_{1}(a,t,x)\leq p_{2}(a,t,x) \ \ a.e. \  in \  Q_{a_\dagger}.
\]
\end{lemma}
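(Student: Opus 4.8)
The plan is to establish Lemma~\ref{lemma2.7} by a truncation-and-limit argument, reducing the case of a locally-bounded mortality rate $\mu$ satisfying \eqref{mu} to the bounded case $(A)$ already handled in Lemmas~\ref{lemma2}--\ref{lemma3}. The key observation is that $\mu^N(a,t,x)=\min\{\mu(a,t,x),N\}$ belongs to $L^\infty(Q_{a_\dagger})$ and satisfies $\mu^N\ge 0$, so each truncated problem falls under hypothesis $(A)$. First I would invoke Lemma~\ref{lemma2} to obtain, for each $N\in\mathbb{N}^+$, a unique solution $p_N\in L^2(Q_{a_\dagger})$ of system~\eqref{eq:2.1} with $\mu$ replaced by $\mu^N$. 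Since $\mu^{N+1}\ge\mu^N$ pointwise while $f$, $\beta$ and $p_0$ are held fixed, the comparison principle of Lemma~\ref{lemma3} yields the monotonicity $0\le p_{N+1}\le p_N$ a.e.\ in $Q_{a_\dagger}$: a larger mortality produces a smaller population.

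Having a monotone nonincreasing sequence bounded below by $0$, I would next derive a uniform $L^2$ bound. Because $\mu^N\ge 0$, testing the equation for $p_N$ against $p_N$ along characteristic lines (exactly as in the derivation of estimate~\eqref{eq:0001167}, where the nonnegative term $\mu^N p_N^2$ is simply dropped) gives a bound of the form
\[
\|p_N\|_{L^2(Q_{a_\dagger})}^2 \le C\bigl(\|p_0\|_{L^2((0,a_\dagger)\times(0,24))}^2+\|f\|_{L^2(Q_{a_\dagger})}^2\bigr),
\]
with $C$ independent of $N$. Combined with the pointwise monotone convergence $p_N(a,t,x)\downarrow p(a,t,x)$ and the dominated convergence theorem (dominated by $p_1\in L^2$), this shows $p_N\to p$ strongly in $L^2(Q_{a_\dagger})$ for some limit $p$. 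I would then pass to the limit in the weak formulation of system~\eqref{eq:2.1}: the linear terms converge by strong $L^2$ convergence, and for the zeroth-order term I would use that $\mu^N p_N\to \mu p$, justified by writing $\mu^N p_N-\mu p=\mu^N(p_N-p)+(\mu^N-\mu)p$ and controlling each piece on the sets where $\mu\le M$ (using $L^\infty_{loc}$ of $\mu$) together with the tail estimate coming from $\int_0^{a_\dagger}\mu\,da=+\infty$ forcing $p$ to vanish near $a_\dagger$. This identifies $p$ as a solution of \eqref{eq:2.1}, and uniqueness follows from the linearity of the problem together with the same energy estimate applied to a difference of two solutions.

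Finally, for the comparison statement I would run the truncation simultaneously for both data sets. Setting $\mu_i^N=\min\{\mu_i,N\}$, the hypotheses $\mu_1\ge\mu_2$, $f_1\le f_2$, $\beta_1\le\beta_2$, $p_{01}\le p_{02}$ transfer to the truncations since $\mu_1\ge\mu_2$ implies $\mu_1^N\ge\mu_2^N$. Lemma~\ref{lemma3} then gives $0\le p_{1,N}\le p_{2,N}$ a.e.\ for every $N$, and passing to the limit $N\to+\infty$ (using the convergence established above for each family) preserves the inequality, yielding $0\le p_1\le p_2$ a.e.\ in $Q_{a_\dagger}$.

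The main obstacle I expect is the limit passage in the term $\mu^N p_N$, because $\mu$ is only \emph{locally} bounded and blows up (in the integral sense) as $a\to a_\dagger$. The delicate point is to show that no mass of $p$ survives where $\mu$ is unbounded, so that the product $\mu p$ is well defined in $L^2$ and the weak formulation closes; this is precisely where the assumption $\int_0^{a_\dagger}\mu(a)\,da=+\infty$ from $(J1)$, reflected in \eqref{mu}, must be used, and it is handled in the cited arguments of \cite[Theorem 4.1.3, Theorem 4.1.4]{As}.
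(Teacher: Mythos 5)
Your proposal is correct and follows essentially the same route as the paper: truncating $\mu$ at level $N$, solving each truncated problem via Lemma \ref{lemma2}, using the comparison principle of Lemma \ref{lemma3} to get a monotone nonincreasing nonnegative sequence $p_N$, and passing to the limit as in \cite[Theorem 4.1.3, Theorem 4.1.4]{As}. You in fact supply more detail than the paper does (the uniform $L^2$ bound, the dominated-convergence argument, and the identification of the delicate term $\mu^N p_N$), and your handling of the comparison statement by simultaneous truncation is exactly what is intended.
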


According to Lemma \ref{lemma2.7} , we obtain the result of Theorem \ref{theorem1} directly.

\section{Existence of an optimal control}
\noindent
In this section, our main job is to obtain the existence of an optimal control of \eqref{eq:01} by  Mazur's Theorem, that is, we prove Theorem \ref{theorem2}.
\vskip 0.3cm

\begin{proof}[Proof of Theorem \ref{theorem2}]
Let $\Psi: U\rightarrow \mathbf{R^{+}}$ be defined by
\[
\Psi(u)=\displaystyle \int_{Q_{{a_{\dagger}}}}u(a,t,x)p^{u}(a,t,x)dtdxda.
\]
Then by the definition of $u(a,t,x)$, we have
\[
\displaystyle \int_{Q_{{a_{\dagger}}}}\varsigma_{1}(a,t,x)\overline{p}(a,t,x)dtdxda\leq\Psi(u)\leq 0,
\]
where $\overline{p}(a,t,x)$ is a solution of system \eqref{eq:01} with $u\equiv0$, $\mu\equiv0$, $\beta=\|\beta(a)\|_{L^{\infty}(0,a_\dagger)}$, $p_{0}=\|p_{0}\|_{L^{\infty}((0,a_\dagger)\times (0,24))}$, that is,
\begin{equation}\label{eq:333}
\left\{ \begin{array}{lll}
Dp- \delta \Delta p = 0,&(a,t,x)\in Q_{a_\dagger}, \\
p(a,t,0) =p(a,t,24),&(a,t)\in (0,a_\dagger)\times (0,T), \\
\partial_x p(a,t,0) =\partial_x  p(a,t,24),&(a,t)\in (0,a_\dagger)\times (0,T), \\
\displaystyle p(0,t,x)=\displaystyle \int_0^{a_\dagger} \beta(a)\displaystyle \int_{x-\eta}^{x+\eta}K(x,s)p(a,t,s)ds da,&(t,x)\in (0,T)\times (0,24), \\
\displaystyle p(a,0,x)=p_0 , &(a,x)\in (0,{a_\dagger} ) \times (0,24).
\end{array}
\right.
\end{equation}
Thus, we can assume that
\[
d=\inf_{u\in U}\Psi(u),
\]
and there exists a sequence $\{u_{N}\} \in U, N  \in N^{*}$ such that
\[
d\leq \Psi(u_{N})< d+\frac{1}{N},
\]
\begin{equation}\label{eq:555}
\Psi(u_{N})\rightarrow d.
\end{equation}
Since the result of Lemma \ref{lemma2.7}, one obtains
\[
0\leq p^{u_{N}}(a,t,x)\leq \overline{p}(a,t,x) \ \ \ a.e. \ in  \ Q_{a_\dagger}.
\]
Thus there exists a subsequence which still be denoted by $\{u_{N}\}$ such that
\[
p^{u_{N}}\rightarrow p^{*} \ \ \ weakly\  in \ L^{2}(Q_{{a_\dagger}}).
\]
By Mazur's Theorem, one has that
$\forall \epsilon >0$, there exists $\lambda_{i}^{N}\geq0$, $\sum_{i=N+1}^{k_{N}} {\lambda_{i}^{N}}=1$ such that
\[
\|p^{*}-\sum_{i=N+1}^{k_{N}} {\lambda_{i}^{N}}p^{u_{i}}\|_{L^{2}(Q_{{a_\dagger}})}\leq \epsilon,
\]
where $k_{N}\geq N+1$.
Now we denote
\[
\widetilde{p}_{N}(a,t,x)=\sum_{i=N+1}^{k_{N}} {\lambda_{i}^{N}}p^{u_{i}}(a,t,x),
\]
therefore,
\[
\widetilde{p}_{N}\rightarrow p^{*}   \ \ \ in  \ L^{2}(Q_{{a_\dagger}}).
\]

Now we consider the sequence $\{\widetilde{u}_{N} \}$ of controls $\{{u}_{i} \}$. Here $\widetilde{u}_{N} (a,t,x)$ is defined by
\begin{equation*}
\widetilde{u}_{N} (a,t,x)=\left\{
\begin{aligned}
&\frac{\sum_{i=N+1}^{k_{N}} {\lambda_{i}^{N}}{u}_{i}(a,t,x)p^{u_{i}}(a,t,x)}{\sum_{i=N+1}^{k_{N}} {\lambda_{i}^{N}}p^{u_{i}}}(a,t,x),\ \ if \sum_{i=N+1}^{k_{N}} {\lambda_{i}^{N}}p^{u_{i}}\neq 0,\\
&\varsigma_{1}(a,t,x),\ \ \ \ \ \ \ \ \ \ \ \ \ \ \ \ \ \ \  \ \ \ \ \ \ \ \ \ \ \ \ \ \ \ \ \ \ \ \ \ \ \ \ \ \ if \sum_{i=N+1}^{k_{N}} {\lambda_{i}^{N}}p^{u_{i}}= 0.
\end{aligned}
\right.
\end{equation*}
It is easy to check that $\widetilde{u}_{N} \in U$. Thus, one learns that there exists a subsequence $\{\widetilde{u}_{N} \}_{N\in N^{*}}$ such that
\[
\widetilde{u}_{N}\rightarrow u^{*} \ \ \ weakly\ \ in \ \ L^{2}(Q_{{a_\dagger}}).
\]
Obviously, $\widetilde{p}_{N}(a,t,x)$ is a solution of
\begin{equation}\label{eq:444}
\left\{ \begin{array}{lll}
Dp- \delta \Delta p+\mu (a)p = \widetilde{u}_{N}(a,t,x)p,&(a,t,x)\in Q_{a_\dagger}, \\
p(a,t,0) =p(a,t,24),&(a,t)\in (0,a_\dagger)\times (0,T), \\
\partial_x p(a,t,0) =\partial_x  p(a,t,24),&(a,t)\in (0,a_\dagger)\times (0,T), \\
\displaystyle p(0,t,x)=\displaystyle \int_0^{a_\dagger} \beta(a)\displaystyle \int_{x-\eta}^{x+\eta}K(x,s)p(a,t,s)ds da,&(t,x)\in (0,T)\times (0,24), \\
\displaystyle p(a,0,x)=p_0(a,x) , &(a,x)\in (0,{a_\dagger} ) \times (0,24).
\end{array}
\right.
\end{equation}
Passing to the limit in \eqref{eq:444}, we get
\begin{equation*}
\left\{ \begin{array}{lll}
Dp^{*}- \delta \Delta p^{*}+\mu (a)p^{*} = u^{*} p^{*},&(a,t,x)\in Q_{a_\dagger}, \\
p^{*}(a,t,0) =p^{*}(a,t,24),&(a,t)\in (0,a_\dagger)\times (0,T), \\
\partial_x p^{*}(a,t,0) =\partial_x  p^{*}(a,t,24),&(a,t)\in (0,a_\dagger)\times (0,T), \\
\displaystyle p^{*}(0,t,x)=\displaystyle \int_0^{a_\dagger} \beta(a)\displaystyle \int_{x-\eta}^{x+\eta}K(x,s)p^{*}(a,t,s)ds da,&(t,x)\in (0,T)\times (0,24), \\
\displaystyle p^{*}(a,0,x)=p_0(a,x) , &(a,x)\in (0,{a_\dagger} ) \times (0,24).
          \end{array}
  \right. \end{equation*}
It means that $p^{*}$ is the solution of system \eqref{eq:01} corresponding to $u^{*}$.

Therefore, we have
\begin{align*}
\sum_{i=N+1}^{k_{N}} {\lambda_{i}^{N}}\Psi({u_{i}})=&\sum_{i=N+1}^{k_{N}} {\lambda_{i}^{N}}\displaystyle \int_{Q_{{a_{\dagger}}}}{u}_{i}(a,t,x)p^{u_{i}}(a,t,x)dx dt da \\
=&\displaystyle \int_{Q_{{a_{\dagger}}}}\widetilde{u}_{N}(a,t,x)\widetilde{p}_{N}(a,t,x)dx dt da \\
\rightarrow &\displaystyle \int_{Q_{{a_{\dagger}}}}{u}^{*}(a,t,x){p}^{*}(a,t,x)dx dt da \\
= &\Psi(u^{*}).
\end{align*}
Using \eqref{eq:555} and the last equation, we can conclude that $d=\Psi(u^{*})$.
\end{proof}

\section{Necessary optimality conditions}
\noindent
In this section, our goal is to obtain the necessary optimality conditions of $(OH)$ which is Theorem \ref{theorem3}.

\begin{proof}[Proof of Theorem \ref{theorem3}]
First of all, we can get that system \eqref{eq:000009} has a unique solution $q(a,t,x)\in L^2(Q_{a_\dagger})$ by the same method as in the proof of the existence and uniqueness of solutions of system \eqref{eq:01} in Section 2.

Since $(u^{*},p^{*})$ is an optimal pair for $(OH)$, we have
\[
\displaystyle \int_{Q_{{a_{\dagger}}}}u^{*}p^{u^{*}}dtdxda\leq \displaystyle \int_{Q_{{a_{\dagger}}}}(u^{*}+\epsilon v)p^{u^{*}+\epsilon v}dtdxda
\]
for any $\epsilon>0$ small enough, arbitrary $v(a,t,x)\in L^{\infty}(Q_{a_{\dag}})$ such that
\begin{equation*}
\left\{ \begin{array}{lll}
v(a,t,x)\leq 0,\ \ \text{if}\ \  u^{*}(a,t,x)=\varsigma_{2}(a,t,x),\\
v(a,t,x)\geq 0,\ \ \text{if}\ \ u^{*}(a,t,x)=\varsigma_{1}(a,t,x),
\end{array}
\right.
\end{equation*}
which implies
\begin{align}\label{eq:00023}
\displaystyle \int_{Q_{{a_{\dagger}}}}u^{*}\frac{p^{u^{*}+\epsilon v}-p^{u^{*}}}{\epsilon}dtdxda+\displaystyle \int_{Q_{{a_{\dagger}}}} vp^{u^{*}+\epsilon v}dtdxda\geq0.
\end{align}
Let $z^{\epsilon}(a,t,x)=\frac{p^{u^{*}+\epsilon v}(a,t,x)-p^{u^{*}}(a,t,x)}{\epsilon}$, $y^{\epsilon}(a,t,x)=\epsilon z^{\epsilon}(a,t,x)$, then $y^{\epsilon}(a,t,x)$ satisfies
\begin{equation*}
\left\{ \begin{array}{lll}
Dy^{\epsilon}- \delta \Delta y^{\epsilon}+\mu (a)y^{\epsilon} = u^{*}y^{\epsilon}+\epsilon v p^{u^{*}+ \epsilon v},&(a,t,x)\in Q_{a_\dagger}, \\
y^{\epsilon}(a,t,0) =y^{\epsilon}(a,t,24),&(a,t)\in (0,a_\dagger)\times (0,T), \\
\partial_x y^{\epsilon}(a,t,0) =\partial_x  y^{\epsilon}(a,t,24),&(a,t)\in (0,a_\dagger)\times (0,T), \\
\displaystyle y^{\epsilon}(0,t,x)=\displaystyle \int_0^{a_\dagger} \beta(a)\displaystyle \int_{x-\eta}^{x+\eta}K(x,s)y^{\epsilon}(a,t,s)ds da,&(t,x)\in (0,T)\times (0,24), \\
\displaystyle y^{\epsilon}(a,0,x)=0 , &(a,x)\in (0,{a_\dagger} ) \times (0,24).
\end{array}
\right.
\end{equation*}
Multiplying the first equation by $y^{\epsilon}$ and integrating on $Q_{t}=(0,a_\dagger)\times (0,t)\times (0,24)$, one obtains
\begin{align*}
\|y^{\epsilon}(t)\|_{L^{2}((0,a_\dagger)\times (0,24))}^{2}
\leq C \displaystyle \int_0^{t} \|y^{\epsilon}(s)\|_{L^{2}((0,a_\dagger)\times (0,24))}^{2}ds+ \epsilon \displaystyle \int_{Q_t} |v| p^{u^{*}+ \epsilon v}|y^{\epsilon}|dsdxda.
\end{align*}
Then by the result of Lemma \ref{lemma2.7} and Bellman's Lemma (see in Appendix), we get
\begin{align*}
\|y^{\epsilon}(t)\|_{L^{2}((0,a_\dagger)\times (0,24))}^{2}
&\leq \epsilon^{2} \displaystyle \int_{Q_{{a_{\dagger}}}} |v|^{2} \overline{p}^{2}dtdxda+(1+C) \displaystyle \int_0^{t} \|y^{\epsilon}(s)\|_{L^{2}((0,a_\dagger)\times (0,24))}^{2}ds\\
&\leq \epsilon^{2} e^{(1+C)t} \displaystyle \int_{Q_{{a_{\dagger}}}} |v|^{2} \overline{p}^{2}dtdxda
\end{align*}
where $\overline{p}(a,t,x)$ is a solution of system \eqref{eq:333}, $t\in[0,T]$ and $C$ is a positive constant.
This implies that
\begin{align}\label{eq:00011}
y^{\epsilon}\rightarrow 0 \  \ in \ \ L^{\infty}(0,T; L^{2}((0,a_\dagger)\times (0,24)) )\ \ as \ \ \epsilon\rightarrow 0^{+}.
\end{align}
So the following convergence holds
\[
p^{u^{*}+\epsilon v}\rightarrow p^{u^{*}}\  \ in \ \ L^{\infty}(0,T; L^{2}((0,a_\dagger)\times (0,24))) \ \ as \ \ \epsilon\rightarrow 0^{+}.
\]

Recalling the definition of $z^{\epsilon}(a,t,x)$, one has that $z^{\epsilon}(a,t,x)$ satisfies
\begin{equation*}
\left\{ \begin{array}{lll}
Dz^{\epsilon}- \delta \Delta z^{\epsilon}+\mu (a)z^{\epsilon} = u^{*}z^{\epsilon}+v p^{u^{*}+ \epsilon v},&(a,t,x)\in Q_{a_\dagger}, \\
z^{\epsilon}(a,t,0) =z^{\epsilon}(a,t,24),&(a,t)\in (0,a_\dagger)\times (0,T), \\
\partial_x z^{\epsilon}(a,t,0) =\partial_x z^{\epsilon}(a,t,24),&(a,t)\in (0,a_\dagger)\times (0,T), \\
\displaystyle z^{\epsilon}(0,t,x)=\displaystyle \int_0^{a_\dagger} \beta(a)\displaystyle \int_{x-\eta}^{x+\eta}K(x,s)z^{\epsilon}(a,t,s)ds da,&(t,x)\in (0,T)\times (0,24), \\
\displaystyle z^{\epsilon}(a,0,x)=0 , &(a,x)\in (0,{a_\dagger} ) \times (0,24).
\end{array}
\right.
\end{equation*}
Let $h^{\epsilon}(a,t,x)=z^{\epsilon}(a,t,x)-z(a,t,x)$, where $z(a,t,x)$ is a solution of the following system
\begin{equation*}
\left\{ \begin{array}{lll}
Dz- \delta \Delta z+\mu (a)z = u^{*}z+v p^{u^{*}},&(a,t,x)\in Q_{a_\dagger}, \\
z(a,t,0) =z(a,t,24),&(a,t)\in (0,a_\dagger)\times (0,T), \\
\partial_x z(a,t,0) =\partial_x z(a,t,24),&(a,t)\in (0,a_\dagger)\times (0,T), \\
\displaystyle z(0,t,x)=\displaystyle \int_0^{a_\dagger} \beta(a)\displaystyle \int_{x-\eta}^{x+\eta}K(x,s)z(a,t,s)ds da,&(t,x)\in (0,T)\times (0,24), \\
\displaystyle z(a,0,x)=0 , &(a,x)\in (0,{a_\dagger}) \times (0,24).
\end{array}
\right.
\end{equation*}
Following the above proof step by step, we can get that
\begin{align*}
\|h^{\epsilon}(t)\|_{L^{2}((0,a_\dagger)\times (0,24))}^{2}\leq e^{(1+C)t} \displaystyle \int_{Q_{{a_{\dagger}}}} |v|^{2} |y^{\epsilon}|^{2}dtdxda.
\end{align*}
Using \eqref{eq:00011}, one obtains
\[
z^{\epsilon}\rightarrow z\  \ in \ \ L^{\infty}(0,T; L^{2}((0,a_\dagger)\times (0,24)))\ \ as \ \ \epsilon\rightarrow 0^{+}.
\]
Passing to the limit in \eqref{eq:00023}, it follows
\begin{align}\label{eq:00024}
\displaystyle \int_{Q_{{a_{\dagger}}}}u^{*}zdtdxda+\displaystyle \int_{Q_{{a_{\dagger}}}} vp^{u^{*}}dtdxda\geq0,
\end{align}
for arbitrary $v(a,t,x)\in L^{\infty}(Q_{a_{\dag}})$ such that
\begin{equation*}
\left\{ \begin{array}{lll}
v(a,t,x)\leq 0,\ \ \text{if}\ \  u^{*}(a,t,x)=\varsigma_{2}(a,t,x),\\
v(a,t,x)\geq 0,\ \ \text{if}\ \ u^{*}(a,t,x)=\varsigma_{1}(a,t,x).
\end{array}
\right.
\end{equation*}
Multiplying the first equation of system \eqref{eq:000009} by $z(a,t,x)$ and integrating on $Q_{a_{\dagger}}$, we get
\begin{align}\label{eq:00025}
\displaystyle \int_{Q_{{a_{\dagger}}}}vp^{u^{*}}qdtdxda=\displaystyle \displaystyle \int_{Q_{{a_{\dagger}}}}u^{*}zdtdxda.
\end{align}
Combining \eqref{eq:00024} with \eqref{eq:00025}, we learn that
\begin{align}
\displaystyle \int_{Q_{{a_{\dagger}}}}vp^{u^{*}}(q+1)dtdxda\geq 0,
\end{align}
for arbitrary $v(a,t,x)\in L^{\infty}(Q_{a_{\dag}})$ such that
\begin{equation*}
\left\{ \begin{array}{lll}
v(a,t,x)\leq 0,\ \ \text{if} \ \ u^{*}(a,t,x)=\varsigma_{2}(a,t,x),\\
v(a,t,x)\geq 0,\ \ \text{if} \ \ u^{*}(a,t,x)=\varsigma_{1}(a,t,x).
\end{array}
\right.
\end{equation*}

For any $(a,t,x)\in Q_{a_{\dag}}$, if $p^{u^{*}}(a,t,x)\neq 0$ holds, we can conclude that
\begin{equation*}
u^{*}(a,t,x)=\left\{
\begin{aligned}
&\varsigma_{1}(a,t,x),\ \ \text{if}\ \ q(a,t,x)>-1,\\
&\varsigma_{2}(a,t,x),\ \ \text{if}\ \ q(a,t,x)<-1.
\end{aligned}
\right.
\end{equation*}

We now consider the set $B=\{(a,t,x)\in Q_{a_{\dag}} | p^{u^{*}}(a,t,x)= 0\}$. Take any function $w(a,t,x)\in L^{\infty}(Q_{a_{\dag}})$ such that $w(a,t,x)\neq 0$ for $(a,t,x) \in B$ and $w(a,t,x)\equiv 0$ for $(a,t,x) \in Q_{a_{\dag}}-B$ and $u^*+w\in U$. Let $z(a,t,x)=p^{u^*+w}-p^{u^*}$ and then it satisfies
\begin{equation*}
\left\{ \begin{array}{lll}
Dz- \delta \Delta z+\mu (a)z = u^*(a,t,x)z+w(a,t,x)z,&(a,t,x)\in Q_{a_\dagger}, \\
z(a,t,0) =z(a,t,24),&(a,t)\in (0,a_\dagger)\times (0,T), \\
\partial_x z(a,t,0) =\partial_x  z(a,t,24),&(a,t)\in (0,a_\dagger)\times (0,T), \\
\displaystyle z(0,t,x)=\displaystyle \int_0^{a_\dagger} \beta(a)\displaystyle \int_{x-\eta}^{x+\eta}K(x,s)z(a,t,s)ds da,&(t,x)\in (0,T)\times (0,24), \\
\displaystyle z(a,0,x)=0 , &(a,x)\in (0,{a_\dagger} ) \times (0,24).
\end{array}
\right.
\end{equation*}
By the uniqueness result, one can infer that $z(a,t,x)\equiv 0$ a.e. in $Q_{a_{\dagger}}$. This implies that we can change $u^*$ in $B$ with arbitrary values in $[\varsigma_1(a,t,x),\varsigma_2(a,t,x)]$ and the value of the related cost functional of $(OH)$ remains the same. Then the conclusion is obvious and the proof is complete.
\end{proof}

\appendix
\setcounter{secnumdepth}{0}
\section{Appendix}
\noindent
We present here a well-known result of Bellman in \cite{As}.

\begin{lemmaA}[Bellman]\label{lemma22}
If $x\in C([a,b])$, $\psi \in L^{1}(a,b)$, $\psi(t)\geq0 \ a.e. \ t \in(a,b)$, $M \in R$ and for each $t\in[a,b]$,
\[
x(t)\leq M+\displaystyle \int_a^{t}\psi(s)x(s)ds,
\]
then
\[
x(t)\leq M \exp\left(\displaystyle \int_a^{t}\psi(s)d s\right).
\]
\end{lemmaA}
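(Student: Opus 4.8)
The plan is to reduce the integral inequality to a differential inequality for the right-hand side and then remove it with an integrating factor. First I would introduce the auxiliary function
\[
R(t)=M+\int_a^{t}\psi(s)x(s)\,ds,\qquad t\in[a,b],
\]
so that the hypothesis reads precisely $x(t)\le R(t)$ on $[a,b]$, and the goal becomes the sharper estimate $R(t)\le M\exp\!\left(\int_a^{t}\psi(s)\,ds\right)$. Since $x$ is continuous and $\psi\in L^{1}(a,b)$, the product $\psi x$ is integrable, so $R$ is absolutely continuous on $[a,b]$; by the Lebesgue differentiation theorem it is differentiable almost everywhere with $R'(t)=\psi(t)x(t)$ a.e. Using $\psi\ge0$ together with $x(t)\le R(t)$ gives the pointwise (a.e.) differential inequality $R'(t)=\psi(t)x(t)\le\psi(t)R(t)$.

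Next I would introduce the integrating factor
\[
E(t)=\exp\!\left(-\int_a^{t}\psi(s)\,ds\right),
\]
which is absolutely continuous and strictly positive with $E(a)=1$ and $E'(t)=-\psi(t)E(t)$ a.e. The product $R(t)E(t)$ is then absolutely continuous, and computing its derivative where both factors are differentiable yields
\[
\bigl(R(t)E(t)\bigr)'=R'(t)E(t)-\psi(t)R(t)E(t)=\bigl(R'(t)-\psi(t)R(t)\bigr)E(t)\le0
\]
almost everywhere, because $E(t)>0$ and $R'(t)-\psi(t)R(t)\le0$. Since an absolutely continuous function with a.e.\ nonpositive derivative is nonincreasing, I conclude $R(t)E(t)\le R(a)E(a)=M$ for all $t\in[a,b]$.

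Finally, dividing by the positive quantity $E(t)$ gives $R(t)\le M\,E(t)^{-1}=M\exp\!\left(\int_a^{t}\psi(s)\,ds\right)$, and chaining this with $x(t)\le R(t)$ delivers the claimed bound. I would note that this argument is insensitive to the sign of $M$, so no positivity assumption on $M$ is needed. The only delicate point, and the one I would state carefully rather than gloss over, is the regularity bookkeeping: because $\psi$ is merely $L^{1}$ the functions $R$ and $RE$ are absolutely continuous rather than $C^{1}$, so the monotonicity conclusion must be drawn from ``absolutely continuous plus a.e.\ nonpositive derivative implies nonincreasing'' (a consequence of the fundamental theorem of calculus for the Lebesgue integral) rather than from classical differentiation. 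Everything else is a routine verification.
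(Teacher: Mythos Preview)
Your argument is correct and is the standard integrating-factor proof of Gronwall--Bellman; the regularity remarks about absolute continuity are appropriate given that $\psi$ is only assumed $L^{1}$. Note, however, that the paper does not supply its own proof of this lemma: it is stated in the Appendix as a well-known result quoted from Ani\c{t}a's monograph, so there is nothing to compare against beyond observing that your proof is a complete and valid justification of the cited statement.
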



\begin{thebibliography}{7}
\bibitem{Al}Ani\c{t}a L I, et al. Optimal harvesting for periodic age-dependent population dynamics with logistic term. Applied Mathematics and Computation, 2009, 215(7): 2701-2715.
\bibitem{Am}Athans M, Falb P L. Optimal control: an introduction to the theory and its applications. Courier Corporation, 2013.
\bibitem{As}Ani\c{t}a S. Analysis and control of age-dependent population dynamics. Springer Science \& Business Media, 2000.
\bibitem{As2}Ani\c{t}a S. Optimal control of a nonlinear population dynamics with diffusion. Journal of Mathematical Analysis and Applications, 1990, 152(1): 176-208.
\bibitem{As3}Ani\c{t}a S. Optimal harvesting for a nonlinear age-dependent population dynamics. Journal of Mathematical Analysis and Applications, 1998, 226(1): 6-22.
\bibitem{Bm}Brokate M. Pontryagin's principle for control problems in age-dependent population dynamics. Journal of Mathematical Biology, 1985, 23(1): 75-101.
\bibitem{Bn}Becker N, Petric D, Boase C, et al. Mosquitoes and their control. New York: Springer, 2003.
\bibitem{Fk}Fister K R, Lenhart S. Optimal control of a competitive system with age-structure. Journal of Mathematical Analysis and Applications, 2004, 291(2): 526-537.
\bibitem{Ga}Garroni M G, Langlais M. Age-dependent population diffusion with external constraint. Journal of Mathematical Biology, 1982, 14(1): 77-94.
\bibitem{Gm1}Gurtin M E, Murphy L F. On the optimal harvesting of age-structured populations: some simple models. Mathematical Biosciences, 1981, 55(1-2): 115-136.
\bibitem{Gm2}Gurtin M E, Murphy L F. On the optimal harvesting of persistent age-structured populations. Journal of Mathematical Biology, 1981, 13(2): 131-148.
\bibitem{Gu}Gurtin M E. A system of equations for age-dependent population diffusion. Journal of Theoretical Biology, 1973, 40(2): 389-392.
\bibitem{Hb}Huho B, et al. Consistently high estimates for the proportion of human exposure to malaria vector populations occurring indoors in rural Africa. International journal of epidemiology, 2013, 42(1): 235.
\bibitem{Kd}Kirk D E. Optimal control theory: an introduction. Courier Corporation, 2012.
\bibitem{Lf}Lewis F L, Vrabie D, Syrmos V L. Optimal control. John Wiley \& Sons, 2012.
\bibitem{Lg}Leitmann G. The calculus of variations and optimal control: an introduction. Springer Science \& Business Media, 2013.
\bibitem{Ll1}Li L L, Ferreira C P, Ainseba B E. Large time behaviour of the solution for an age-structured population model, to appear.
\bibitem{Ll2}Li L L, Ferreira C P, Ainseba B E. Local exact controllability of an age-structured problem modelling mosquito plasticity, to appear.
\bibitem{Ma}Maxmen A. Malaria surge feared: the WHO releases action plan to tackle the spread of insecticide-resistant mosquitoes. Nature, 2012, 485(7398): 293-294.
\bibitem{Pe}Park E J, et al. Optimal harvesting for periodic age-dependent population dynamics. SIAM Journal on Applied Mathematics, 1998, 58(5): 1648-1666.
\bibitem{Ss}Sougoufara S, et al. Biting by Anopheles funestus in broad daylight after use of long-lasting insecticidal nets: a new challenge to malaria elimination. Malaria journal, 2014, 13(1): 125.
\bibitem{Tj}Trape J F, et al. Malaria morbidity and pyrethroid resistance after the introduction of insecticide-treated bednets and artemisinin-based combination therapies: a longitudinal study. The Lancet infectious diseases, 2011, 11(12): 925-932.
\bibitem{We}Webb G F. Theory of nonlinear age-dependent population dynamics. CRC Press, 1985.
\bibitem{WHO1}World Health Organization. World malaria report 2008. World Health Organization, 2008.
\bibitem{WHO2}World Health Organization. World malaria report 2012. World Health Organization, 2012.
\bibitem{Zc}Zhao C, Wang M, Zhao P. Optimal control of harvesting for age-dependent predator-prey system. Mathematical and computer modelling, 2005, 42(5-6): 573-584.
\end{thebibliography}
\end{document}